\definecolor {processblue}{cmyk}{0.96,0,0,0}
\newtheorem{Theorem}{Theorem}[section]
\newtheorem{Proposition}{Proposition}[section]
\newtheorem{Lemma}{Lemma}[section]
\newtheorem{Corollary}{Corollary}[section]
\newtheorem{Definition}{Definition}[section]
\newtheorem{Example}{Example}[section]
\newtheorem{Remark}{Remark}[section]
\newtheorem{Question}{Question}[section]
\newcommand*{\rom}[1]{\expandafter\@slowromancap\romannumeral #1@}
\begin{document}
\title{xxxx}
\date{}
 \title{Projections of cartesian products of the self-similar sets without the irrationality assumption}
\author{Kan Jiang }
\maketitle{}
\begin{abstract}
Let $\beta>1$. Define a class of similitudes
\[S=\left\{f_{i}(x)=\dfrac{x}{\beta^{n_i}}+a_i:n_i\in \mathbb{N}^{+}, a_i\in \mathbb{R}\right\}.\]
Let $\mathcal{A}$ be the collection of all the self-similar sets generated by the  similitudes from $S$.
In this paper, we  prove that for any $\theta\in[0,\pi)$ and  $K_1, K_2\in \mathcal{A}$,
$Proj_{\theta}(K_1\times K_2)$ is similar to a self-similar set or an attractor of some infinite iterated function system, where  $Proj_{\theta}$ denotes the orthogonal projection onto  $L_{\theta}$, and  $L_{\theta}$  denotes  the line through the origin in direction $\theta$.
As a corollary, $\dim_{P}(Proj_{\theta}(K_1\times K_2))=\overline{\dim}_{B}(Proj_{\theta}(K_1\times K_2))$ holds for  any $\theta\in[0,\pi)$ and any $K_1, K_2\in \mathcal{A}$,  where $\dim_{P}$ and $\overline{\dim}_{B}$ denote the packing and upper box dimension. 
 Whether $Proj_{\theta}(K_1\times K_2)$ is similar to a self-similar set or not is uniquely determined by the similarity ratios of $K_1$ and $K_2$ rather than the angle $\theta.$
  When $Proj_{\theta}(K_1\times K_2)$ is similar to a self-similar set, in terms of the finite type condition \cite{NW}, we are able to calculate in cerntain cases  the Hausdorff dimension of 
$Proj_{\theta}(K_1\times K_2)$. If $Proj_{\theta}(K_1\times K_2)$ is similar to an attractor of some infinite iterated function system, then by virtue of the Vitali covering lemma \cite{FG} we give an estimation of the Hausdorff dimension of 
$Proj_{\theta}(K_1\times K_2)$. For some cases, we can calculate, by means  of Mauldin and Urbanski' result \cite{MRD},  the exact Hausdorff dimension of $Proj_{\theta}(K_1\times K_2)$. 
 We also find some  non-trivial  examples such that for some angle $\theta\in[0,\pi)$ and some $K_1, K_2\in \mathcal{A}$, $\dim_{H}(Proj_{\theta}(K_1\times K_2))=\dim_{H}(K_1)+\dim_{H}(K_2)$. 
\end{abstract}
\section{Introduction}
Let $L_{\theta}$  be   the line through the origin in direction $\theta$, and 
 $Proj_{\theta}$ denotes the orthogonal projection onto  $L_{\theta}$. Given two Borel sets $A,B\in \mathbb{R}$,
analyzing  the set $Proj_{\theta}(A\times B)$ is a crucial topic in geometric measure theory.  The classical Marstrand theorem \cite{FG} states that 
\begin{Theorem}
Given two Borel sets $A,B\in \mathbb{R}$.
\begin{itemize}
\item [(1)] If $\dim_{H}(A)+\dim_{H}(B)\leq 1$, then for almost all $\theta\in[0, \pi)$,  $$\dim_{H}(Proj_{\theta}(A\times B))=\dim_{H}(A)+\dim_{H}(B);$$ 
\item [(2)] If $\dim_{H}(A)+\dim_{H}(B)>1$, then for almost all $\theta\in[0, \pi)$,  $Proj_{\theta}(A\times B)$ has positive Lebesgue measure. 
\end{itemize}
\end{Theorem}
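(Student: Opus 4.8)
The plan is to prove both parts by the potential–theoretic (Kaufman–Mattila) method, reducing the geometric statement to energy estimates for suitable measures. By inner regularity I may assume $A$ and $B$ are compact. Writing $\pi_{\theta}=Proj_{\theta}$ and $e_{\theta}=(\cos\theta,\sin\theta)$, I identify $L_{\theta}$ with $\R$ so that $\pi_{\theta}(x,y)=x\cos\theta+y\sin\theta$. The two tools are Frostman's lemma (a compact set $E$ with $\mathcal{H}^{t}(E)>0$ carries a probability measure $\mu$ with $\mu(B(z,r))\le Cr^{t}$, hence finite $t'$-energy $I_{t'}(\mu)=\iint|z-w|^{-t'}\,d\mu(z)\,d\mu(w)<\infty$ for every $t'<t$) together with its converse (if $E$ supports a measure of finite $t'$-energy, then $\dim_{H}E\ge t'$).

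For part (1), fix $s_{1}<\dim_{H}(A)$, $s_{2}<\dim_{H}(B)$ with $s_{1}+s_{2}\le 1$, and take Frostman measures $\mu_{A},\mu_{B}$ on $A,B$. The product $\mu=\mu_{A}\times\mu_{B}$ satisfies $\mu(B(z,r))\le Cr^{s_{1}+s_{2}}$, so $I_{t}(\mu)<\infty$ for every $t<s_{1}+s_{2}$. Pushing forward gives $\mu_{\theta}=(\pi_{\theta})_{*}\mu$, and since $\pi_{\theta}(z)-\pi_{\theta}(w)=|z-w|\cos(\phi-\theta)$ with $\phi$ the direction of $z-w$, I would average the projected energy over $\theta$:
\[
\int_{0}^{\pi} I_{t}(\mu_{\theta})\,d\theta=\iint |z-w|^{-t}\Big(\int_{0}^{\pi}|\cos(\phi-\theta)|^{-t}\,d\theta\Big)\,d\mu(z)\,d\mu(w)=c(t)\,I_{t}(\mu),
\]
where the inner integral equals the finite constant $c(t)=\int_{0}^{\pi}|\cos\psi|^{-t}\,d\psi$ precisely because $t<1$. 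Finiteness of the left-hand side forces $I_{t}(\mu_{\theta})<\infty$, hence $\dim_{H}(\pi_{\theta}(A\times B))\ge t$, for a.e.\ $\theta$; intersecting the full-measure sets along a sequence $t\uparrow s_{1}+s_{2}$ and then $s_{1}\uparrow\dim_{H}(A)$, $s_{2}\uparrow\dim_{H}(B)$ yields $\dim_{H}(\pi_{\theta}(A\times B))\ge\dim_{H}(A)+\dim_{H}(B)$ for a.e.\ $\theta$. The matching upper bound is immediate, since $\pi_{\theta}$ is $1$-Lipschitz and therefore $\dim_{H}(\pi_{\theta}(A\times B))\le\dim_{H}(A\times B)$.

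For part (2), I instead choose $s_{1}+s_{2}>1$, so that the same product measure $\mu$ now has finite $1$-energy, $I_{1}(\mu)<\infty$. Here I pass to the Fourier side and use the projection–slice identity $\widehat{\mu_{\theta}}(r)=\widehat{\mu}(r\,e_{\theta})$, whence
\[
\int_{0}^{\pi}\!\int_{\R}|\widehat{\mu_{\theta}}(r)|^{2}\,dr\,d\theta=\int_{0}^{\pi}\!\int_{\R}|\widehat{\mu}(r\,e_{\theta})|^{2}\,dr\,d\theta=\int_{\R^{2}}|\widehat{\mu}(\xi)|^{2}\,\frac{d\xi}{|\xi|}=c\,I_{1}(\mu)<\infty,
\]
by passing to polar coordinates and invoking the standard formula $I_{1}(\mu)=c\int_{\R^{2}}|\widehat{\mu}(\xi)|^{2}|\xi|^{-1}\,d\xi$. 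Thus $\widehat{\mu_{\theta}}\in L^{2}(\R)$ for a.e.\ $\theta$, so by Plancherel $\mu_{\theta}$ is absolutely continuous with an $L^{2}$ density; in particular $\pi_{\theta}(A\times B)\supseteq\operatorname{supp}(\mu_{\theta})$ has positive Lebesgue measure for a.e.\ $\theta$.

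I expect the crux of the whole argument, and the source of the dichotomy at dimension $1$, to be the convergence threshold of the angular integral $\int_{0}^{\pi}|\cos\psi|^{-t}\,d\psi$, which is finite exactly when $t<1$: this is what lets the energy transfer succeed below dimension $1$ and forces the switch to the Fourier/$L^{2}$ argument at and above it. The remaining difficulty is bookkeeping rather than conceptual: the exceptional $\theta$-sets depend on $t,s_{1},s_{2}$, so I must work along countable sequences and intersect the resulting full-measure sets, and I should emphasize that the additivity of the two one-dimensional exponents in the conclusion is exactly the product Frostman bound $\mu(B(z,r))\le Cr^{s_{1}+s_{2}}$ in disguise.
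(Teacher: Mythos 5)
The paper does not prove this statement at all --- it is quoted as the classical Marstrand projection theorem from Falconer's book --- so there is no internal proof to compare against; your potential--theoretic argument is the standard (Kaufman--Mattila) proof of that classical result. Part (2) and the lower bound in part (1) are correct as you present them: the product Frostman estimate $\mu(B(z,r))\le Cr^{s_1+s_2}$, the angular integral $\int_0^\pi|\cos\psi|^{-t}\,d\psi<\infty$ exactly for $t<1$, the Fourier identity $\widehat{\mu_\theta}(r)=\widehat{\mu}(re_\theta)$ with the polar-coordinate computation, and the intersection of full-measure sets over countable families of exponents are all sound (the reduction to compact subsets also quietly disposes of the measurability of $\pi_\theta(A\times B)$, which is only analytic for general Borel $A,B$).

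The genuine gap is the sentence claiming the ``matching upper bound is immediate.'' The Lipschitz property of $\pi_\theta$ only gives $\dim_H(\pi_\theta(A\times B))\le\dim_H(A\times B)$, and in general $\dim_H(A\times B)\ge\dim_H(A)+\dim_H(B)$ with possibly strict inequality, so this bounds the projection by the wrong quantity. Indeed, by the Besicovitch--Moran construction there are Borel sets $A,B\subset\mathbb{R}$ with $\dim_H(A)=\dim_H(B)=0$ but $A+B\supseteq[0,1]$, hence $\dim_H(A\times B)\ge 1$; Marstrand's theorem applied to the planar set $A\times B$ then gives $\dim_H(\pi_\theta(A\times B))=1$ for almost every $\theta$, so the equality asserted in part (1) fails for these sets. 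In other words, the upper bound you need is not merely unproved --- it is false in general, and no argument can close this step. The correct classical statement (Falconer's Projection Theorem 6.1) reads $\dim_H(\pi_\theta(A\times B))=\min\{1,\dim_H(A\times B)\}$ for a.e.\ $\theta$, with $\dim_H(A\times B)$ in place of $\dim_H(A)+\dim_H(B)$; the two coincide only when the product formula holds with equality (for instance when one factor has equal Hausdorff and upper box dimension, as is the case for the self-similar sets the paper actually works with). Your proof, restated with $\dim_H(A\times B)$ as the target, is complete; as a proof of the literal statement it cannot be repaired.
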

Unfortunately,  Marstrand theorem does not offer any information for a specific angle $\theta.$  
For the self-similar sets, 
Peres and Shmerkin \cite{PS}, Hochman and Shmerkin \cite{Hochman2012} proved the following elegent result.
\begin{Theorem}\label{Thm2}
Let $K_1$ and $K_2$ be two self-similar sets  with IFS's $\{f_i(x)=r_ix+a_i\}_{i=1}^{n}$ and  $\{g_j(x)=r_j^{\prime}x+b_j\}_{j=1}^{m}$, respectively. If for any $r_i, r_j^{\prime}$, $$\dfrac{\log |r_i| }{\log |r_j^{\prime}|}\notin \mathbb{Q},$$ then 
$$\dim_{H}(K_1+K_2)=\min\{\dim_{H}(K_1)+\dim_{H}(K_2),1\},$$
and $$\dim_{H}(K_1+K_2)=\dim_{P}(K_1+K_2)=\dim_{B}(K_1+K_2).$$
\end{Theorem}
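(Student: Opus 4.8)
\emph{Proof proposal.} The plan is to split the statement into an elementary upper bound, a ``squeezing'' argument that forces the three dimensions to coincide, and a single genuinely hard lower bound for the Hausdorff dimension, which is where the irrationality hypothesis does all the work. First observe that $K_1+K_2$ is the image of the planar set $K_1\times K_2$ under the Lipschitz map $(x,y)\mapsto x+y$, so
\[
\dim_{H}(K_1+K_2)\le \dim_{H}(K_1\times K_2)=\dim_{H}(K_1)+\dim_{H}(K_2),
\]
where the last equality uses that self-similar sets satisfy $\dim_{H}=\overline{\dim}_{B}$ together with the product formula; since $K_1+K_2\subset\mathbb{R}$ we also have $\overline{\dim}_{B}(K_1+K_2)\le 1$. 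The same Lipschitz-image bound applied to box dimension gives $\overline{\dim}_{B}(K_1+K_2)\le\min\{\dim_{H}(K_1)+\dim_{H}(K_2),1\}$. As $\dim_{H}\le\dim_{P}\le\overline{\dim}_{B}$ always holds, once we establish the matching lower bound $\dim_{H}(K_1+K_2)\ge\min\{\dim_{H}(K_1)+\dim_{H}(K_2),1\}$, all three quantities are squeezed to the common value, and the asserted equality of Hausdorff, packing and box dimension follows for free.

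For the lower bound I would pass to measures. Choose self-similar measures $\mu_1,\mu_2$ on $K_1,K_2$ with $\dim_{H}\mu_i=\dim_{H}K_i=:s_i$ (these exist by the exact-dimensionality and the variational principle for self-similar measures). Their convolution $\mu_1\ast\mu_2$, the push-forward of $\mu_1\times\mu_2$ under addition, is supported on $K_1+K_2$, so it suffices to prove $\dim_{H}(\mu_1\ast\mu_2)\ge\min\{s_1+s_2,1\}$. The main tool is the multiscale entropy framework: writing $H(\nu,\mathcal{D}_n)$ for the Shannon entropy of $\nu$ relative to the level-$n$ dyadic partition, one has $\dim_{H}\nu=\lim_n H(\nu,\mathcal{D}_n)/(n\log 2)$ for exact-dimensional $\nu$, and convolution corresponds to additive combinatorics of these dyadic distributions. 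The strategy is to show that $H(\mu_1\ast\mu_2,\mathcal{D}_n)$ grows at the full rate $\min\{s_1+s_2,1\}\,n\log 2+o(n)$.

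The engine is Hochman's inverse theorem for the entropy of convolutions: if the convolution fails to gain the expected amount of entropy at scale $2^{-n}$, then at a high-density set of intermediate scales $k\le n$ one of the two measures must be structurally ``concentrated on approximate arithmetic progressions'' in its level-$k$ dyadic components. Here is where the hypothesis enters. By exact self-similarity the branching of $\mu_i$ seen at scale $2^{-n}$ is governed by generation $k_i(n)\approx n\log 2/\log(1/|r_i|)$ of its IFS, so the ``phase'' at which each measure is examined is controlled by the fractional part of $n\log 2/\log(1/|r_i|)$. The condition $\log|r_i|/\log|r'_j|\notin\mathbb{Q}$ makes the pair of phases equidistribute on the torus (Weyl), so the two measures are never in resonant, progression-like configurations at a positive density of common scales. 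This rules out the conclusion of the inverse theorem, forcing full entropy growth and hence $\dim_{H}(\mu_1\ast\mu_2)=\min\{s_1+s_2,1\}$.

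The hard part is precisely the last step: converting the Diophantine ``decoupling of phases'' into the quantitative, uniform-over-scales statement demanded by the inverse theorem, and doing so without any separation hypothesis on the IFS's, so that the entropy of the self-similar measures may deviate from its limiting value at individual scales and overlaps must be absorbed. An alternative route that repackages the same difficulty is the CP-chain / scenery-flow formalism of Furstenberg and of Peres--Shmerkin: one shows that the micro-set distribution generated by $\mu_1\times\mu_2$ under magnification is, thanks to the irrationality, an ergodic product CP-distribution whose projection in the direction $\pi/4$ achieves the dimension conservation $\min\{s_1+s_2,1\}$, the crux again being the ergodic independence of the two scaling actions inferred from the incommensurability of $\log|r_i|$ and $\log|r'_j|$.
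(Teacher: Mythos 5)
First, a point of comparison that matters: the paper does not prove this statement. Theorem \ref{Thm2} is quoted in the introduction as background, attributed to Peres--Shmerkin \cite{PS} and Hochman--Shmerkin \cite{Hochman2012}, and no argument for it appears anywhere in the text. So there is no in-paper proof to measure your attempt against; the only fair comparison is with those sources. Against that benchmark, your outline reproduces the correct architecture: the upper bound via Lipschitz images and the product formula (valid because self-similar sets have equal Hausdorff and box dimension), the squeeze $\dim_{H}\le\dim_{P}\le\overline{\dim}_{B}$ once the lower bound is in place, and a lower bound for $\dim_{H}(\mu_1\ast\mu_2)$ obtained by showing that incommensurability of $\log|r_i|$ and $\log|r_j'|$ forbids resonance between the two measures across scales. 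This is indeed how \cite{PS} (via CP-chains/local entropy averages) and \cite{Hochman2012} proceed.

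That said, as a proof rather than a roadmap, two gaps are genuine. First, the step ``choose self-similar measures $\mu_i$ with $\dim_{H}\mu_i=\dim_{H}K_i$'' is not delivered by exact-dimensionality plus a variational principle: for self-similar sets with overlaps it is a nontrivial theorem that one can find subsystems (iterates of the original maps) satisfying strong separation whose attractors have dimension within $\epsilon$ of $\dim_{H}K_i$, and one must then run the argument with measures of dimension $s_i-\epsilon$ and let $\epsilon\to0$; moreover one must check that passing to iterated subsystems preserves the irrationality hypothesis on the contraction ratios (it does, since the ratios of the subsystem are products of the original ones, but this needs to be said). Second, and more seriously, the entire content of the theorem sits in the step you explicitly defer: converting Weyl equidistribution of the phases $n\log 2/\log(1/|r_i|)$ into the uniform entropy-growth statement for the convolution (or into ergodicity of the product CP-distribution and dimension conservation of its $\pi/4$-projection). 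You correctly identify this as the crux and name the right tools, but you do not carry it out, so the proposal should be regarded as a correct strategy citing the hard analytic input rather than a self-contained proof. If the intent is merely to justify quoting the theorem, citing \cite{PS} and \cite{Hochman2012} as the paper does is the appropriate resolution.
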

The condition in Theorem \ref{Thm2} is called the irrationality assumption. Note that $K_1+K_2$ is similar  to $Proj_{\pi/4}(K_1\times K_2)$. Therefore, Theorem \ref{Thm2}  states that under the irrationality assumption, the Hausdorff dimension of the projection of two self-similar sets through the angle $\pi/4$ does not decrease. Peres and Shmerkin  indeed \cite{PS} proved a  general result in $\mathbb{R}^2$, i.e. if  the group generated by the rotations of IFS is dense in $[0,\pi)$, then for any angle $\theta\in[0,\pi)$,  the  Hausdorff dimension  of the projection of the attractor coincides with the expected  Hausdorff dimension.
However, without the irrationality assumption, generally the dimension  of $Proj_{\theta}(K_1\times K_2)$ may drop.  In this paper, we consider  the following class of similitudes:
let $\beta>1$,  define a class of similitudes
\[S:=\left\{f_{i}(x)=\dfrac{x}{\beta^{n_i}}+a_i:n_i\in \mathbb{N}^{+}, a_i\in \mathbb{R}\right\}.\]
Let $\mathcal{A}$ be the collection of all the self-similar sets generated by the  similitudes from $S$.  In \cite{PS}, Peres and Shmerkin proved
the following result.
\begin{Theorem}
For any $K_1, K_2\in \mathcal{A}$ such that their Hausdorff dimensions coincide with the associated similarity dimensions, then  there exists some $\theta\in[0,\pi)$ such that 
$$\dim_{H}(Proj_{\theta}(K_1\times K_2))<\min\{1,\dim_{H}(K_1)+\dim_{H}(K_2)\}.$$
\end{Theorem}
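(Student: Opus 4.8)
The plan is to exhibit an interior angle $\theta\in(0,\pi/2)$ at which the projection, which is an honest self-similar set precisely because all contraction ratios share the base $\beta$, carries an exact overlap that forces its dimension below the Marstrand value. First I would record the reduction: for the unit vector $(\cos\theta,\sin\theta)$ one has $Proj_{\theta}(x,y)=x\cos\theta+y\sin\theta$, so $Proj_{\theta}(K_1\times K_2)$ is isometric to $\cos\theta\,K_1+\sin\theta\,K_2$. Write $s_i=\dim_H K_i$, which by hypothesis equals the similarity dimension, so that $s_1$ solves $\sum_i\beta^{-n_i s_1}=1$ and likewise $s_2$; the quantity to beat is $\min\{1,s_1+s_2\}$.

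Next I would build a genuine self-similar description of the projection, using that every contraction ratio is a power of $\beta^{-1}$. Fix a large $L$ (in the appropriate residue class modulo the greatest common divisor of the exponents) and collect the words $u$ in the alphabet of $K_1$ of total weight $N(u)=\sum_\ell n_{u_\ell}=L$; these give $P_L$ cylinders $f_u(K_1)$, each a copy of $K_1$ scaled by $\beta^{-L}$, and they cover $K_1$. Doing the same for $K_2$ at weight $L$ gives $Q_L$ cylinders. Hence $\cos\theta\,K_1+\sin\theta\,K_2=\bigcup_{u,v}\bigl(\beta^{-L}(\cos\theta\,K_1+\sin\theta\,K_2)+c_{u,v}\bigr)$ with $c_{u,v}=\cos\theta\,f_u(0)+\sin\theta\,g_v(0)$, an IFS of $P_LQ_L$ similitudes, all of ratio $\beta^{-L}$. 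A standard generating-function (renewal) estimate gives $P_L\sim\bar n_1^{-1}\beta^{Ls_1}$ and $Q_L\sim\bar n_2^{-1}\beta^{Ls_2}$ with $\bar n_1,\bar n_2\ge1$, so that $P_LQ_L\le\beta^{L(s_1+s_2)}$ for all large $L$ while the similarity dimension of this IFS tends to $s_1+s_2$.

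Then I would force a resonance. Choosing weight-$L$ words $u_0\ne u_1$ of $K_1$ with $f_{u_0}(0)\ne f_{u_1}(0)$ and weight-$L$ words $v_0\ne v_1$ of $K_2$ with $g_{v_0}(0)\ne g_{v_1}(0)$ (possible since $K_1,K_2$ are not singletons), the coincidence $c_{u_0,v_0}=c_{u_1,v_1}$ becomes the single linear condition $\tan\theta=\dfrac{f_{u_0}(0)-f_{u_1}(0)}{g_{v_1}(0)-g_{v_0}(0)}$, which is solvable for an interior $\theta$. At that $\theta$ two of the defining similitudes coincide, so deleting the duplicate leaves an IFS with at most $P_LQ_L-1$ maps and the same attractor; therefore
\[
\dim_H Proj_{\theta}(K_1\times K_2)\le\frac{\log(P_LQ_L-1)}{L\log\beta}<\frac{\log(P_LQ_L)}{L\log\beta}\le s_1+s_2 .
\]
When $s_1+s_2\le1$ the right-hand side is the target, so this already produces the strict drop at a genuine interior direction, which is the substantive phenomenon (for most $\theta$ no drop occurs). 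When $s_1+s_2>1$ the Marstrand value is $1$, and here I would instead project onto the lower-dimensional factor: $Proj_{0}=K_1$ and $Proj_{\pi/2}=K_2$ have dimensions $s_1,s_2$, one of which is $<1$ as soon as $\min\{s_1,s_2\}<1$, giving $\dim_H<1$.

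The main obstacle lives in the second step when the exponents $n_i,m_j$ genuinely differ: one must make the two systems share the scale $\beta^{-L}$ and control $P_LQ_L$ so that a \emph{single} exact overlap provably lowers the exponential growth rate below $\beta^{L(s_1+s_2)}$, rather than merely below the possibly larger finite-$L$ count. This is exactly where the common base $\beta$ is indispensable: it is the commensurability $\log r_i/\log r_j'\in\mathbb{Q}$ that the irrationality assumption forbids, and the renewal asymptotics for $P_L,Q_L$ (valid along the correct residue class when the exponents have a common factor) are what close the estimate. Two secondary points remain: confirming that the imposed coincidence yields an admissible $\theta\in[0,\pi)$ rather than a degenerate direction (guaranteed by the freedom in the four chosen words), and treating the borderline configuration $s_1=s_2=1$ separately, since there the axis projections no longer suffice.
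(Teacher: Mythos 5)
First, a point of reference: the paper does not actually prove this statement; it is quoted from Peres and Shmerkin \cite{PS}, so there is no internal proof to compare against. Judged on its own terms, your proposal has the right core idea --- pass to $\cos\theta\,K_1+\sin\theta\,K_2$, present it as a self-similar set at a single scale $\beta^{-L}$, and choose $\tan\theta$ to force two of the translations $c_{u,v}$ to coincide so that the map count, and hence the similarity dimension, drops strictly --- and in the \emph{homogeneous} case (all $n_i$ equal, all $m_j$ equal, $L$ a common multiple) the argument is essentially complete: there $P_LQ_L=\beta^{L(s_1+s_2)}$ exactly, the decomposition into $P_LQ_L$ copies of ratio $\beta^{-L}$ is genuine, and one exact coincidence gives $\dim_H\le\log(P_LQ_L-1)/(L\log\beta)<s_1+s_2$.

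The genuine gap is in your second step for non-homogeneous systems, and it is more basic than the renewal asymptotics you flag. If the exponents $n_i$ differ, the cylinders $f_u(K_1)$ with $N(u)=L$ do \emph{not} cover $K_1$: a point whose cumulative weight sequence jumps over $L$ (e.g.\ for $\{x/2,\ x/4+3/4\}$ with $\beta=2$, the cylinder $f_{12}$ of weight $3$ is missed at $L=2$) lies in no weight-$L$ cylinder. Consequently the identity $\cos\theta K_1+\sin\theta K_2=\bigcup_{u,v}\bigl(\beta^{-L}(\cos\theta K_1+\sin\theta K_2)+c_{u,v}\bigr)$ fails, and your upper bound applies only to the attractor of the proper sub-system $\{f_u:N(u)=L\}$, which controls nothing about the full sumset. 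The natural repair --- stopping each word at the first time its weight reaches $L$ --- produces cylinders of \emph{different} ratios $\beta^{-N(u)}$, and then $\cos\theta f_u(K_1)+\sin\theta g_v(K_2)$ is a copy of $\cos\theta K_1+\beta^{N(u)-M(v)}\sin\theta K_2$, not of the original sumset; this is exactly the phenomenon that forces the present paper to introduce Matchings and infinite iterated function systems (cf.\ Definition 2.6 and Lemma 2.16, where a finite self-similar presentation is obtained only under strong commensurability of the ratios). So the step you describe as ``the main obstacle'' is not a bookkeeping issue about $P_LQ_L$; it is the absence of any single-scale self-similar presentation, i.e.\ the hard part of the theorem. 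Two smaller remarks: your axis-projection device for $s_1+s_2>1$ in fact also disposes of the case $s_1+s_2\le1$ whenever $s_1,s_2>0$ (take $\theta=0$, giving $\dim=s_1<s_1+s_2$), which shows the statement as literally written is almost degenerate and that the substantive content must be the drop at an interior angle; and the configuration $s_1=s_2=1$ (e.g.\ $K_1=K_2=[0,1]$) is not merely a case needing separate treatment --- there the asserted inequality is false for every $\theta$, so some nondegeneracy hypothesis is being suppressed in the statement itself.
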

Generally, the Hausdorff dimension of   $Proj_{\theta}(K_1\times K_2)$ is difficult to  calculate. The main aim of this paper is to analyze the set $Proj_{\theta}(K_1\times K_2)$, and give an estimation of its Hausdorff dimension. 

The following  are the main results of this paper. 
\begin{Theorem}\label{Main}
Given any $\theta\in[0,\pi)$,  and any $K_1, K_2\in \mathcal{A}$, $Proj_{\theta}(K_1\times K_2)$ is similar to a self-similar set or an attractor of some infinite iterated function system. 
\end{Theorem}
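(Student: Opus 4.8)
The plan is to identify $L_{\theta}$ with $\R$ so that $Proj_{\theta}(x,y)=x\cos\theta+y\sin\theta$, which turns the target set into a sumset: $Proj_{\theta}(K_1\times K_2)=\cos\theta\,K_1+\sin\theta\,K_2$. If $\cos\theta=0$ or $\sin\theta=0$ the set is a scaled copy of $K_2$ or of $K_1$, hence already self-similar, so I would dispose of these cases first and assume both factors nonzero. The key observation is that $\cos\theta\,K_1$ is the attractor of $\{x\mapsto \beta^{-p_i}x+\cos\theta\,a_i\}$ and $\sin\theta\,K_2$ that of $\{x\mapsto \beta^{-q_j}x+\sin\theta\,b_j\}$; thus both remain self-similar with the \emph{same} contraction exponents $p_i,q_j\in\N^{+}$, and only these exponents (not $\theta$) will enter the combinatorics. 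It therefore suffices to prove the following normalized statement: if $A,B\subset\R$ are self-similar with ratios $\{\beta^{-p_i}\}_{i\in I}$ and $\{\beta^{-q_j}\}_{j\in J}$, then $A+B$ is similar to a self-similar set or to the attractor of an infinite IFS.

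For the normalized statement I would expand along the IFS codings and group terms by the power of $\beta$: every element of $A+B$ is written as $\sum_{N\ge 0}\beta^{-N}c_N$, where at level $N$ the $A$-coding contributes a digit exactly when $N$ is a partial sum $p_{i_1}+\cdots+p_{i_k}$, and likewise for $B$. Since all exponents are positive integers, this is a constrained base-$\beta$ expansion. I would then track the two ``pointers'' (the next $A$-level and the next $B$-level) through their offset $d$, and show by a short case analysis that once started at $d=0$ the offset stays in the bounded range $[\,1-q_{\max},\,p_{\max}-1\,]$, so that only finitely many states occur and the admissible digit sequences form a finite-state system determined purely by $\{p_i\}$ and $\{q_j\}$.

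Next I would introduce \emph{renewals}: a level is a renewal when $d=0$, i.e.\ when the two pointers coincide. Decomposing each expansion into the \emph{primitive blocks} lying between consecutive renewals, each block $\pi$ carries a total advance $g(\pi)\ge 1$ and a translation $t(\pi)$ (the sum of the digits it places), and so determines a similitude $\psi_{\pi}(x)=\beta^{-g(\pi)}x+t(\pi)$, which is a genuine contraction because $g(\pi)\ge 1$ and its ratio is positive. I claim $A+B$ is the attractor of $\{\psi_{\pi}\}_{\pi\in\Pi}$, where $\Pi$ is the set of primitive blocks generated by the finite offset automaton. Because $\Pi$ is a regular language over finitely many states, it is finite precisely when no arbitrarily long primitive block exists, i.e.\ when the offset graph has no cycle avoiding the state $d=0$; this yields a finite IFS and hence a self-similar set. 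When such a cycle exists, $\Pi$ is countably infinite and we obtain an infinite IFS. As $\Pi$ depends only on the exponents $\{p_i\},\{q_j\}$, this dichotomy is controlled by the ratios and not by $\theta$, exactly as announced.

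The hard part will be making the identification ``$A+B=$ attractor'' rigorous in the infinite case. The renewal-coded points are precisely the images of $A+B$ under finite and infinite concatenations of the maps $\psi_{\pi}$, but an infinite path may eventually avoid the renewal state $d=0$ forever, and such ``never-renewing'' points are not directly produced by these concatenations. I expect to handle this by showing that these points lie in the closure of the renewal-coded set and that, since $A+B$ is compact, passing to the closure (as in the standard definition of an infinite-IFS attractor) loses nothing, so that $A+B$ coincides with the unique attractor. Establishing the offset-boundedness cleanly, verifying reachability of the state $d=0$, and checking that $\{\psi_{\pi}\}_{\pi\in\Pi}$ is a bona fide contracting infinite IFS with a well-defined attractor are the remaining technical points; the finite case, by contrast, follows immediately from uniqueness of the attractor of a finite IFS.
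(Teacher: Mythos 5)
Your proposal follows essentially the same route as the paper: reduce to the sumset $\cos\theta\,K_1+\sin\theta\,K_2$, decompose the base-$\beta$ codings into primitive synchronized blocks (your ``primitive blocks between renewals'' are exactly the paper's Matchings), let each block determine a similitude, and split into the finite case (self-similar set) and the countably infinite case (attractor in the closure sense of Fernau), with the never-renewing codings handled by the same approximation-and-closure argument the paper uses in its Lemmas on $C$, $\overline{E}=K_1+sK_2$, and the invariance of $K_1+sK_2$ under the IIFS. The finite-state offset automaton is a clean repackaging of the Matching combinatorics (and gives an explicit finiteness criterion), but it is not a genuinely different argument.
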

In terms of Theorem \ref{Main}, we have the following corollaries.
\setcounter{Corollary}{4}
\begin{Corollary}
 For any $\theta\in[0,\pi)$ and any $K_1, K_2\in \mathcal{A}$, $$\dim_{P}(Proj_{\theta}(K_1\times K_2))=\overline{\dim}_{B}(Proj_{\theta}(K_1\times K_2)).$$
\end{Corollary}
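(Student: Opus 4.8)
The plan is to deduce the corollary directly from Theorem \ref{Main}, together with the fact that both the packing dimension and the upper box dimension are bi-Lipschitz invariant. By Theorem \ref{Main}, $Proj_{\theta}(K_1\times K_2)$ is similar to a set $F$ that is either a self-similar set or the attractor of an infinite iterated function system, and since a similarity is bi-Lipschitz, it suffices to prove that $\dim_{P}(F)=\overline{\dim}_{B}(F)$ in each of these two cases. One inequality is automatic: for every bounded set one always has $\dim_{P}(F)\le\overline{\dim}_{B}(F)$, because the trivial one-element cover $\{F\}$ is admissible in the characterization $\dim_{P}(F)=\inf\{\sup_{i}\overline{\dim}_{B}(F_{i}):F\subseteq\bigcup_{i}F_{i}\}$ of the packing dimension as the modified upper box dimension \cite{FG}. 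Hence the whole content lies in the reverse inequality $\dim_{P}(F)\ge\overline{\dim}_{B}(F)$.

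The main tool is the homogeneity criterion: if $F$ is compact and $\overline{\dim}_{B}(F\cap V)=\overline{\dim}_{B}(F)$ for every open set $V$ meeting $F$, then $\dim_{P}(F)=\overline{\dim}_{B}(F)$ \cite{FG}. Its proof is a Baire category argument --- given any countable cover $F=\bigcup_{i}F_{i}$, completeness of the compact metric space $F$ forces some $\overline{F_{i}}$ to contain a relative open set $F\cap V$, whence $\overline{\dim}_{B}(F_{i})=\overline{\dim}_{B}(\overline{F_{i}})\ge\overline{\dim}_{B}(F\cap V)=\overline{\dim}_{B}(F)$ --- and I would invoke it as a black box. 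The task therefore reduces to verifying the local homogeneity of the upper box dimension.

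For the self-similar case this is immediate from the cylinder structure. Writing $F=\bigcup_{\mathbf{i}}f_{\mathbf{i}}(F)$ and taking any open $V$ with $V\cap F\neq\emptyset$, the cylinders $f_{\mathbf{i}}(F)$ have diameters tending to $0$, so some cylinder satisfies $f_{\mathbf{i}}(F)\subseteq V$; since $f_{\mathbf{i}}$ is a similarity, $\overline{\dim}_{B}(F\cap V)\ge\overline{\dim}_{B}(f_{\mathbf{i}}(F))=\overline{\dim}_{B}(F)$, while the reverse inequality holds by monotonicity. Thus $\overline{\dim}_{B}(F\cap V)=\overline{\dim}_{B}(F)$ and the criterion applies at once.

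The genuinely delicate case, which I expect to be the main obstacle, is the infinite system. Here $F$ need not be compact, and worse, local homogeneity of the box dimension can fail for an abstract infinite system: the set $\{1/n:n\in\mathbb{N}\}\cup\{0\}$ has upper box dimension $1/2$ concentrated at the single accumulation point $0$ while being $0$-dimensional elsewhere, so that $\dim_{P}<\overline{\dim}_{B}$ there. The corollary must therefore exploit the specific structure furnished by Theorem \ref{Main}, in which all contraction ratios are integer powers of $\beta^{-1}$ and the maps recur with bounded, uniform geometry. I would first pass to the closure $\overline{F}$, which is compact, and verify the homogeneity hypothesis for it: at each point of $F$ lying in a finite cylinder the shrinking-cylinder argument again yields a scaled copy of $F$ inside any neighbourhood, and the arithmetic uniformity of the exponents $n_{i}$ is exactly what guarantees that the accumulation points in $\overline{F}\setminus F$ carry no \emph{larger} local box dimension than the bulk. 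Granting this, the criterion gives $\dim_{P}(\overline{F})=\overline{\dim}_{B}(\overline{F})=\overline{\dim}_{B}(F)$. Finally, from the explicit description of the system I would argue that $\overline{F}\setminus F$ is countable; since countable sets have packing dimension $0$ and $\dim_{P}$ is finitely stable, this yields $\dim_{P}(F)=\dim_{P}(\overline{F})=\overline{\dim}_{B}(F)$, completing the proof. The heart of the matter is thus the homogeneity of the box dimension of $\overline{F}$, whose verification must use the arithmetic of the exponents rather than any abstract feature of infinite systems.
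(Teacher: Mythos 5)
Your overall strategy --- reduce via Theorem \ref{Main} and bi-Lipschitz invariance, then apply the homogeneity criterion ``$\overline{\dim}_{B}(F\cap V)=\overline{\dim}_{B}(F)$ for all open $V$ meeting $F$ implies $\dim_{P}(F)=\overline{\dim}_{B}(F)$'' --- is sound, and it is considerably more self-contained than the paper's treatment: the paper disposes of this corollary in one line (Proposition \ref{P=B}), citing Mauldin and Urba\'{n}ski \cite{MRD} together with Lemmas \ref{conjugate} and \ref{IIFS}. Your finite (self-similar) case is correct as written.

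The infinite case, however, contains a genuine gap, and it comes from applying the criterion to the wrong set. You take $F$ to be the (possibly non-compact) limit set of the infinite system and then pass to $\overline{F}$, which forces you to assert (i) that the points of $\overline{F}\setminus F$ ``carry no larger local box dimension'' because of ``the arithmetic uniformity of the exponents'', and (ii) that $\overline{F}\setminus F$ is countable. Neither claim is proved, neither follows from anything you wrote, and (ii) in particular has no justification: the boundary $\overline{E}\setminus E$ of a Mauldin--Urba\'{n}ski limit set has no reason to be countable in general. The correct move is already available in the paper and renders both claims unnecessary. The set whose dimension is at stake, $K_1+sK_2$ (which is similar to $Proj_{\theta}(K_1\times K_2)$ by Lemma \ref{conjugate}), is compact, being a continuous image of $K_1\times K_2$, and Lemma \ref{IIFS} states precisely that $\overline{\bigcup_{i}\phi_{i}(K_1+sK_2)}=K_1+sK_2$, where the $\phi_{i}$ are similarities with ratios bounded by a uniform $c<1$. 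Hence $\bigcup_{i}\phi_{i}(K_1+sK_2)$ is dense in $K_1+sK_2$; iterating (each cylinder $\phi_{i_1}\circ\cdots\circ\phi_{i_n}(K_1+sK_2)$ is again the closure of the union of its children and has diameter at most $c^{n}\,\mathrm{diam}(K_1+sK_2)$) shows that every open set meeting $K_1+sK_2$ contains a full similar copy $\phi_{i_1}\circ\cdots\circ\phi_{i_n}(K_1+sK_2)$. This is exactly your finite-case argument, verbatim, applied directly to the compact set $K_1+sK_2$; the homogeneity criterion then finishes the proof, and no arithmetic property of the exponents $n_{i}$ enters at any point. With that replacement your argument is complete.
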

\begin{Corollary}\label{Cor}
Given $k\geq 1$. Suppose that $\beta$ is a Pisot number. Let $K_1$ be the attractor of the following IFS 
$$\left\{f_i(x)=\dfrac{x}{\beta^{k}}+a_i,1\leq i\leq n\right\},$$
and $K_2$ be the attractor of the following IFS 
$$\left\{g_j(x)=\dfrac{x}{\beta^{l_jk}}+b_j,1\leq j\leq m\right\},$$
where $l_j\in\mathbb{N}^{+}$. If $a_i, b_j,\tan\theta\in \mathbb{Z}[\beta], 1\leq i\leq n,1\leq j\leq m$, then $Proj_{\theta}(K_1\times K_2)$ is similar to a self-similar set with  the finite type condition \cite{NW}. Moreover, the Hausdorff dimension of $Proj_{\theta}(K_1\times K_2)$ can be calculated explicitly. 
\end{Corollary}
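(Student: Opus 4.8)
The plan is to strip the projection down to a sum set, realise that sum set as the attractor of an explicit \emph{finite} IFS, and then extract the finite type condition from the Pisot hypothesis. Writing $Proj_{\theta}(x,y)=x\cos\theta+y\sin\theta$ for the coordinate on $L_{\theta}$, I would first dispose of $\theta=\pi/2$: there $Proj_{\pi/2}(K_1\times K_2)$ is a scalar multiple of $K_2$, which is itself generated by the similitudes $g_j$ with ratios $\beta^{-l_jk}$ and translations $b_j\in\mathbb{Z}[\beta]$, so it is covered by the argument below with the $K_1$-part suppressed. Assuming $\cos\theta\neq0$ and factoring out $\cos\theta$, the set $Proj_{\theta}(K_1\times K_2)$ is similar to $E:=K_1+tK_2$ with $t=\tan\theta\in\mathbb{Z}[\beta]$. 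Since similarities preserve self-similarity, the finite type condition and $\dim_{H}$, it suffices to treat $E$.

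Next I would produce a genuine finite IFS for $E$. Put $\gamma=\beta^{k}$. For each depth $l$ one has $K_1=\bigcup_{\vec i\in\{1,\dots,n\}^{l}}\bigl(\gamma^{-l}K_1+\sum_{p=1}^{l}a_{i_p}\gamma^{-(p-1)}\bigr)$, while $tK_2=\bigcup_{j}\bigl(\gamma^{-l_j}tK_2+tb_j\bigr)$. Decomposing $K_1$ to depth $l_j$ inside the $j$-th piece yields $E=\bigcup_{j,\vec i}\psi_{j,\vec i}(E)$, where $\psi_{j,\vec i}(x)=\gamma^{-l_j}x+\bigl(tb_j+\sum_{p=1}^{l_j}a_{i_p}\gamma^{-(p-1)}\bigr)$. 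This is a finite IFS of similitudes whose ratios $\gamma^{-l_j}=\beta^{-l_jk}$ are all integer powers of $\beta^{-k}$, so $E$ is similar to a self-similar set. Expanding these maps one $\beta^{-k}$-factor at a time shows that every point of $E$ has a base-$\beta^{-k}$ expansion $\sum_{p\ge0}c_p\beta^{-kp}$ with digits in the finite set $\mathcal{D}=\{a_i+tb_j\}\cup\{a_i\}\subset\mathbb{Z}[\beta]$, subject to the admissibility rule that a digit carrying a $tb_j$-contribution must be followed by exactly $l_j-1$ digits carrying none; I would record this as a finite-state (sofic) constraint.

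To verify the finite type condition \cite{NW} I would compare cylinders of equal contraction ratio $\beta^{-kN}$, whose base points are the partial sums $\sum_{p=0}^{N-1}c_p\beta^{-kp}$. For two neighbouring cylinders, i.e. ones whose images lie within $C\beta^{-kN}$ of each other, the rescaled difference $w=\beta^{kN}\sum_{p=0}^{N-1}(c_p-c_p')\beta^{-kp}=\sum_{u=1}^{N}(c_{N-u}-c_{N-u}')\beta^{ku}$ lies in $\mathbb{Z}[\beta]$ and satisfies $|w|\le C$ in the real embedding. The decisive point is that $\beta$ is Pisot: for every Galois conjugate $\sigma$ we have $|\sigma(\beta)|<1$, whence $|\sigma(w)|\le\sum_{u\ge1}\max_{d\in\mathcal{D}-\mathcal{D}}|\sigma(d)|\,|\sigma(\beta)|^{ku}\le C_\sigma$ \emph{uniformly in} $N$. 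Thus $w$ ranges over a set of algebraic integers bounded in every archimedean embedding, which is finite; consequently only finitely many neighbour types arise and $E$ satisfies the finite type condition. I expect this uniform control of the conjugates to be the main obstacle and the essential use of the hypotheses: without the Pisot property the rescaled differences need not stay in a precompact set, and without $a_i,b_j,t\in\mathbb{Z}[\beta]$ they would not lie in a discrete lattice, so finiteness would break down.

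Finally, once the finite type condition holds, Ngai--Wang \cite{NW} attach to the finitely many neighbour types a weighted incidence matrix whose spectral radius governs the measure of cylinders at each scale; the value of the exponent making this spectral radius equal to $1$ is exactly $\dim_{H}E$, and is computable from the finite data $\{\beta,k,l_j,\mathcal{D}\}$. Since $Proj_{\theta}(K_1\times K_2)$ is similar to $E$, we conclude $\dim_{H}(Proj_{\theta}(K_1\times K_2))=\dim_{H}E$, which can therefore be calculated explicitly. The remaining work is routine bookkeeping of the neighbour types and checking that the sofic admissibility is compatible with the finite type formalism.
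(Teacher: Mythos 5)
Your argument is correct and lands in the same place as the paper, but it is considerably more self-contained than the paper's proof, which is a one-line citation: the paper invokes Lemma \ref{finitecc} (the set $D$ of Matchings is finite when the ratios of $K_1$ are homogeneous equal to $\beta^{-k}$ and those of $K_2$ are $\beta^{-l_jk}$) and Lemma \ref{sss} ($\sharp D<\infty$ implies $K_1+sK_2$ is self-similar), both imported from \cite{SumKan}, and then simply asserts that Ngai--Wang's finite type condition applies. Your depth-$l_j$ synchronized decomposition produces exactly the finite IFS $\{\psi_{j,\vec i}\}$ that the Matching machinery yields in this homogeneous case, so that part is the same construction obtained more directly and without the auxiliary block formalism. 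The genuine added value is your verification of the finite type condition: the paper nowhere explains why $\beta$ being Pisot together with $a_i,b_j,\tan\theta\in\mathbb{Z}[\beta]$ forces finitely many neighbourhood types, whereas your Galois-conjugate bound on the rescaled differences $w$ is precisely the standard argument that makes the citation of \cite{NW} legitimate, and you correctly identify it as the step where both hypotheses are used. Two small points to tidy up in a full write-up: Ngai--Wang compare cylinders whose contraction ratios are merely comparable rather than exactly equal, so the exponents appearing in $w$ range over a window of bounded width around $N$ (your conjugate bound still controls this, since only boundedly many exponents exceed $N$); and the finiteness of the set of admissible $w$ uses that $\mathbb{Z}[\beta]$ consists of algebraic integers of the fixed number field $\mathbb{Q}(\beta)$, so that boundedness in every archimedean embedding indeed yields a finite set.
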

\begin{Corollary}\label{IIFSS}
Given $\theta\in[0,\pi)$, and $K_1, K_2\in \mathcal{A}$.
Suppose that 
$Proj_{\theta}(K_1\times K_2)$ is similar to an attractor with infinite iterated function system. Then 
 there exist two attractors $J_1, J_2$ with infinite iterated function systems
such that 
$$s_1(\theta)\leq \dim_{H}(Proj_{\theta}(K_1\times K_2)) \leq s_2(\theta),$$
where $s_1(\theta)$ is the Hausdorff dimension of $J_1$, and $s_2(\theta)$ is the similarity dimension of $J_2.$
\end{Corollary}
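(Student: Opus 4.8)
The plan is to exploit the explicit infinite iterated function system produced in the proof of Theorem \ref{Main}. Write $Proj_{\theta}(K_1\times K_2)$ as the attractor of a countable family of similarities $\{h_k(x)=r_kx+c_k\}_{k\in\mathcal{I}}$ on the line $L_{\theta}\cong\R$, where each ratio has the form $r_k=\beta^{-N_k}$ with $N_k\in\N^{+}$; these maps arise by refining the coarser of the two factor decompositions $K_1=\bigcup_i f_i(K_1)$ and $K_2=\bigcup_j g_j(K_2)$ until the two powers of $\beta$ coincide, and the case at hand is precisely the one in which this refinement yields infinitely many distinct maps. Let $I$ denote the convex hull of $Proj_{\theta}(K_1\times K_2)$, so that every $h_k(I)$ is a genuine closed interval containing the corresponding cylinder of the attractor.

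For the upper bound I would take $J_2:=Proj_{\theta}(K_1\times K_2)$ itself, equipped with the infinite system $\{h_k\}_{k\in\mathcal{I}}$, and let $s_2(\theta):=\inf\{t\ge 0:\sum_{k\in\mathcal{I}}r_k^{t}\le 1\}$ be its similarity dimension. Covering the attractor by its $n$-th level cylinders and summing diameters shows that for every $t>s_2(\theta)$ the covering sums stay bounded, whence $\dim_{H}(Proj_{\theta}(K_1\times K_2))\le s_2(\theta)$; this inequality is insensitive to overlaps and so requires no separation hypothesis.

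The lower bound is where the Vitali covering lemma \cite{FG} enters. First apply the lemma to the interval family $\{h_k(I)\}_{k\in\mathcal{I}}$ to extract a countable, pairwise disjoint subfamily indexed by some $\mathcal{I}'\subseteq\mathcal{I}$. Because the $h_k(I)$ are disjoint for $k\in\mathcal{I}'$ and each contains $h_k\bigl(Proj_{\theta}(K_1\times K_2)\bigr)$, the sub-system $\{h_k\}_{k\in\mathcal{I}'}$ satisfies the strong separation condition; let $J_1$ be its attractor, which is automatically contained in $Proj_{\theta}(K_1\times K_2)$. Since $\{h_k\}_{k\in\mathcal{I}'}$ is an infinite conformal iterated function system of similarities satisfying the open set condition, the result of Mauldin and Urbanski \cite{MRD} applies and gives $\dim_{H}(J_1)=s_1(\theta)$, where $s_1(\theta)=\inf\{t\ge 0:\sum_{k\in\mathcal{I}'}r_k^{t}\le 1\}$. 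The inclusion $J_1\subseteq Proj_{\theta}(K_1\times K_2)$ then yields $\dim_{H}(Proj_{\theta}(K_1\times K_2))\ge s_1(\theta)$, completing the sandwich.

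The main obstacle I anticipate is verifying that the subsystem $\{h_k\}_{k\in\mathcal{I}'}$ is genuinely infinite and falls within the regular regime required by \cite{MRD}: one must rule out the degenerate possibility that the Vitali selection retains only finitely many intervals, and check that the pressure function $t\mapsto\sum_{k\in\mathcal{I}'}r_k^{t}$ actually crosses the value $1$ so that $s_1(\theta)$ is well defined. Controlling how much dimension is lost when the overlapping maps of $\mathcal{I}\setminus\mathcal{I}'$ are discarded is exactly the gap between $s_1(\theta)$ and $s_2(\theta)$, and sharpening the selection, for instance by extracting disjoint families scale by scale rather than globally, is the natural route to narrowing it.
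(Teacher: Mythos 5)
Your upper bound is in substance the paper's Lemma \ref{similarity}: take $J_2=E$ with the full Matching system $\Phi^{\infty}$, cover $K_1+sK_2$ by the level-$k$ images of the convex hull, and use that the sums $\left(\sum_i r_i^{t}\right)^k$ stay bounded at the similarity dimension. Nothing to object to there. The genuine problem is in your lower bound. You run the Vitali selection only over the first-level family $\{h_k(I)\}_{k\in\mathcal{I}}$, but that family is not a Vitali class of the attractor (it does not contain sets of arbitrarily small diameter around each point), so the covering lemma buys you nothing beyond a maximal pairwise disjoint subfamily $\mathcal{I}'$. Such a subfamily can perfectly well be finite --- even a single map --- if the first-level intervals form an overlapping chain or share a common point. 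In that case your $J_1$ is not the attractor of an \emph{infinite} iterated function system, which the statement explicitly requires, and the bound $s_1(\theta)\leq \dim_{H}(Proj_{\theta}(K_1\times K_2))$ degenerates to a triviality. You flag this yourself in the last paragraph but do not close it.

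The paper avoids both defects by invoking Moran's Theorem \ref{Vitali}: the selection is carried out in the full composition semigroup $\Psi^{*}=\{\phi_{i_1}\circ\cdots\circ\phi_{i_k}\}$, whose images of $E$ do form a Vitali class, and the greedy rule $|\phi_{k+1}(E)|\geq \tfrac{1}{2}\sup\{|\phi(E)|:\cdots\}$ then does not terminate, so $\Psi$ is infinite; more importantly, the fine-cover version of the Vitali argument preserves Hausdorff measure, $\mathcal{H}^{s}(G)=\mathcal{H}^{s}(E)$, hence $\dim_{H}(G)=\dim_{H}(E)$. That identity is the real content of the lower bound: the disjointification costs no dimension at all, whereas your level-one selection can lose an uncontrolled amount --- exactly the ``gap between $s_1(\theta)$ and $s_2(\theta)$'' you mention. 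To repair your argument, replace the family $\{h_k(I)\}_{k\in\mathcal{I}}$ by $\{\phi(E):\phi\in\Psi^{*}\}$ and appeal to Theorem \ref{Vitali} (Moran \cite{Moran}) rather than to the bare covering lemma; the Mauldin--Urba\'nski formula is then applied to the selected subsystem exactly as you intend.
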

For some cases,  even though  $Proj_{\theta}(K_1\times K_2)$ is similar to some attractor with infinite iterated function system which does  not satisfy the open set condition, we can still calculate the  exact Hausdorff dimension of $Proj_{\theta}(K_1\times K_2)$. The following example is  one of the  typical cases. 
\setcounter{Example}{7}
\begin{Example}\label{Example}
Let $K_1=K_2$ be the attractor of the IFS $$\left\{f_1(x)=\dfrac{x}{\beta^4},f_2(x)=\dfrac{x+\beta^8-1}{\beta^8}\right\}.$$
Suppose that $\beta>1.39$, then  for
any $\theta\in\left(\arctan \dfrac{\beta^{12}-\beta^{8}+1}{\beta^{12}-\beta^8-\beta^4}, \arctan \dfrac{\beta^{12}-2\beta^4}{\beta^{12}-\beta^8+1}\right)$ 
$$\dim_{H}(Proj_{\theta}(K_1\times K_2))=\dfrac{\log \sqrt{\dfrac{1+\sqrt{5}}{2}}}{\log \beta}=\dim_{H}(K_1)+\dim_{H}(K_2).$$
Let $\theta=\arctan\dfrac{\beta^{8}-1}{\beta^8-\beta^4+1}$ and $\beta>1.41$. 
Then $$\dim_{H}(Proj_{\theta}(K_1\times K_2))=\dfrac{\log \gamma}{\log \beta}<\dim_{H}(K_1)+\dim_{H}(K_2),$$ where $\gamma\approx 1.2684$ is the largest  real root of 
$$x^{20}-2x^{16}-2x^{12}+x^{8}+x^{4}-1=0.$$
\end{Example}
We can  find similar examples as Example \ref{Example} and calculate the Hausdorff dimension of 
$Proj_{\theta}(K_1\times K_2)$ for some explicit angle $\theta$. 

 This paper is arranged as follows.  In section 2, we give the proofs of the main results. In section 3 we analyze  Example \ref{Example}. Finally, we give some remarks.
\section{Proof of Theorem \ref{Main}}
\subsection{Preliminaries  and some key lemmas}
In this section, we shall prove that $Proj_{\theta}(K_1\times K_2)$ is similar to a self-similar set or  an attractor with infinite iterated function system. First, we introduce some definitions and results. 
The definition of self-similar set is due to Hutchinson \cite{Hutchinson}. Let $K$ be the self-similar  set of the IFS $\{f_i\}_{i=1}^{m}$.
For any  $x \in K$, there exists a sequence
$(i_n)_{n=1}^{\infty}\in\{1,\ldots,m\}^{\mathbb{N}}$ such that
\[x=\lim_{n\to \infty}f_{i_1}\circ \cdots\circ f_{i_n}(0).\] We call $(i_n)$  a coding of $x$.  We can define a surjective projection map between the symbolic space $\{1,\ldots, m\}^{\mathbb{N}}$ and the self-similar set $K$ by
 $$\pi((i_n)_{n=1}^{\infty}):=\lim_{n\to \infty}f_{i_1} \circ\cdots \circ
f_{i_n}(0).$$
Usually, the coding of $x$ is not unique \cite{DJKL,KarmaKan2}.
Given two self-similar sets $K_1$ and $K_2$ from the class $\mathcal{A}$.
Suppose that  the  IFS's of $K_1$ and $K_2$ are
$\{f_{i}(x)=\frac{x}{\beta^{n_i}}+a_i\}_{i=1}^{n}$ and
  $\{g_{j}(x)=\frac{x}{\beta^{m_j}}+b_j\}_{j=1}^{m}$, respectively. 
 Note
that
\[f_{i}(x)=\frac{x}{\beta^{n_i}}+a_i=\frac{x+\beta^{n_i}a_i}{\beta^{n_i}}=\frac{x}{\beta^{n_i}}+\frac{0}{\beta}+\frac{0}{\beta^2}+\cdots+
\frac{0}{\beta^{n_i-1}}+\frac{\beta^{n_i}a_i}{\beta^{n_i}}.\] Therefore, we can  identify  $f_i(x)$ with a block $(\underbrace{000\cdots 0}_{n_i-1}a_i^{'}
)$,\, where $a_i^{'}=\beta^{n_i}a_i$. Conversely, any block $(\underbrace{000\cdots 0}_{n_i-1}a_i^{'}
)$ can determine a unique similitude with respect to  $\beta$. 
  For simplicity  we denote this block by
$\hat{P}_{i}=(\underbrace{000\cdots 0}_{n_i-1}a_i^{'}
)$.  In what follows, we  identify  $f_{i}$ with $f_{\hat{P}_{i}}$. Similarly, we may define blocks in terms of the IFS of $K_2$. Let $D_1=\{\hat{P}_{1},\, \hat{P}_{2},\,\cdots,\,\hat{P}_{n}\}$ and $D_2=\{\hat{Q}_{1},\, \hat{Q}_{2},\,\cdots,\,\hat{Q}_{m}\}$, where $\hat{P}_{i}=(\underbrace{000\cdots 0}_{n_i-1}a_i^{'}
)$, $a_i^{'}=\beta^{n_i}a_i$,  $\hat{Q}_{j}=(\underbrace{000\cdots 0}_{m_j-1}b_j^{'}
)$ and $b_j^{'}=\beta^{m_j}b_j$. The following lemma is trivial. 
\begin{Lemma}\label{coding}
\[K_{1}=\{x=\lim\limits_{n\to \infty}f_{\hat{P}_{i_1}}\circ f_{\hat{P}_{i_2}} \circ\cdots\circ f_{\hat{P}_{i_n}}(0):\hat{P}_{i_j}\in D_1\}.\]
\[K_{2}=\{y=\lim\limits_{n\to \infty}g_{\hat{Q}_{i_1}}\circ g_{\hat{Q}_{i_2}} \circ\cdots\circ g_{\hat{Q}_{i_n}}(0):\hat{Q}_{i_j}\in D_2\}.\]
 We call  the  infinite concatenation $\hat{P}_{i_1}\ast \hat{P}_{i_2} \ast \cdots$ ($\hat{Q}_{i_1}\ast \hat{Q}_{i_2} \ast \cdots$) a coding of x ($y$). 
\end{Lemma}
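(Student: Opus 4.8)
The plan is to recognize that this lemma is merely a transcription of Hutchinson's characterization of the attractor into the block notation introduced just above it, so the only substantive points to check are that the correspondence $f_i\leftrightarrow f_{\hat P_i}$ loses no information and that the displayed limits genuinely exist. First I would recall the defining property of the self-similar set stated before the lemma: the projection map $\pi\bigl((i_n)_{n=1}^{\infty}\bigr)=\lim_{n\to\infty}f_{i_1}\circ\cdots\circ f_{i_n}(0)$ is a well-defined surjection from the symbolic space $\{1,\ldots,n\}^{\mathbb{N}}$ onto $K_1$. The well-definedness carries the only quantitative content: since $\beta>1$, every ratio $\beta^{-n_i}$ is strictly less than $1$, so each $f_i$ is a contraction, the ratios of the compositions $f_{i_1}\circ\cdots\circ f_{i_n}$ shrink to $0$, and hence the sequence $\bigl(f_{i_1}\circ\cdots\circ f_{i_n}(0)\bigr)_{n\ge 1}$ is Cauchy with a limit independent of the base point $0$. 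That this limit lies in $K_1$, and that every point of $K_1$ arises this way, is precisely Hutchinson's theorem.

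Next I would record the bijection between the maps and the blocks. The computation $f_i(x)=\frac{x}{\beta^{n_i}}+a_i=\frac{x+\beta^{n_i}a_i}{\beta^{n_i}}$ shows that the pair $(n_i,a_i)$ is recovered from, and recovers, the block $\hat P_i=(\underbrace{0\cdots0}_{n_i-1}a_i')$ with $a_i'=\beta^{n_i}a_i$: the length of the block fixes $n_i$ and its last entry fixes $a_i$. Consequently $f_i$ and $f_{\hat P_i}$ denote the same similitude, and replacing each $f_{i_j}$ by $f_{\hat P_{i_j}}$ and the index set $\{1,\ldots,n\}$ by the block set $D_1$ leaves the collection of limits unchanged. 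This yields the displayed formula for $K_1$ verbatim, and the identical argument with the substitution $g_j\leftrightarrow g_{\hat Q_j}$ and $D_2$ in place of $\{1,\ldots,m\}$ gives the formula for $K_2$. The terminology \emph{coding} is then adopted by definition, consistent with the coding $(i_n)$ introduced earlier under the identification $i_j\leftrightarrow\hat P_{i_j}$.

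Since the substance of the lemma is purely notational, I do not anticipate any genuine obstacle — this is why the author can call it trivial. The one point worth a single line of justification is the existence of the limit defining $\pi$, which, as noted, follows immediately from $\beta>1$; everything else is a relabeling of Hutchinson's surjection by blocks.
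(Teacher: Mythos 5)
Your proposal is correct and matches the paper's intent: the paper offers no proof at all, declaring the lemma trivial, and your writeup is exactly the standard justification it is implicitly relying on (the block $\hat P_i$ determines and is determined by the pair $(n_i,a_i)$, so the displayed sets are Hutchinson's characterization of $K_1$ and $K_2$ under a relabeling, with the limits existing because each contraction ratio $\beta^{-n_i}<1$). Nothing is missing.
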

\begin{Lemma}\label{conjugate}
For any $\theta\in(0,\pi)\setminus{\pi/2}$, 
$Proj_{\theta}(K_1\times K_2))$ is similar to $K_1+ \tan(\theta)K_2.$  $Proj_{\pi/2}(K_1\times K_2))=K_2.$
\end{Lemma}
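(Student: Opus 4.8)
The plan is to reduce the planar projection to an explicit arithmetic sum on the real line and then factor out a single nonzero scalar. First I would fix the unit direction vector $u_{\theta}=(\cos\theta,\sin\theta)$ spanning $L_{\theta}$ and recall that the orthogonal projection of a point $(x,y)\in\R^{2}$ onto $L_{\theta}$ is $\langle (x,y),u_{\theta}\rangle\, u_{\theta}$. Parametrizing $L_{\theta}$ by arc length through the isometry $t\mapsto t\,u_{\theta}$ of $\R$ onto $L_{\theta}$, the projected point $Proj_{\theta}(x,y)$ corresponds to the scalar coordinate $x\cos\theta+y\sin\theta$. Consequently, under this fixed isometric identification of $L_{\theta}$ with $\R$,
$$Proj_{\theta}(K_1\times K_2)=\{x\cos\theta+y\sin\theta : x\in K_1,\ y\in K_2\}=\cos\theta\, K_1+\sin\theta\, K_2.$$

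Next I would split into the two cases of the statement. For $\theta\in(0,\pi)\setminus\{\pi/2\}$ one has $\cos\theta\neq 0$, so I can factor it out of the sum:
$$\cos\theta\, K_1+\sin\theta\, K_2=\cos\theta\bigl(K_1+\tan\theta\, K_2\bigr).$$
Since multiplication by the nonzero constant $\cos\theta$ is a similarity of $\R$ (with ratio $|\cos\theta|$), and the arc-length identification of $L_{\theta}$ with $\R$ is an isometry, composing the two maps shows that $Proj_{\theta}(K_1\times K_2)$ is similar to $K_1+\tan\theta\, K_2$, as claimed. For the remaining case $\theta=\pi/2$ we have $\cos\theta=0$ and $\sin\theta=1$, so the displayed set collapses to $\{y:y\in K_2\}=K_2$, giving $Proj_{\pi/2}(K_1\times K_2)=K_2$ after the identification.

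There is essentially no serious obstacle here; the only point requiring care is to keep the two identifications straight — the arc-length parametrization of $L_{\theta}$ and the scalar multiplication by $\cos\theta$ — and to observe that a composition of similarities is again a similarity, so that the assertion ``similar to a subset of $\R$'' is well defined independently of the chosen identification. The lemma is purely a coordinate computation, and I would record it precisely because it licenses every subsequent argument to work with the concrete linear combination $K_1+\tan\theta\, K_2$ on the real line rather than with the planar projection itself.
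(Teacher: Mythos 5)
Your proposal is correct and follows exactly the paper's argument: identify $L_{\theta}$ with $\R$ by arc length, note that $Proj_{\theta}(x,y)$ has scalar coordinate $x\cos\theta+y\sin\theta=(x+\tan\theta\, y)\cos\theta$, and factor out the nonzero constant $\cos\theta$. The only difference is that you spell out the isometric identification and the $\theta=\pi/2$ case explicitly, which the paper leaves implicit.
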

\begin{proof}
Note that  $Proj_{\theta}(x,y)$ is  point on the line $L_{\theta}$ at distance  $$x\cos\theta+y\sin\theta=(x+sy)\cos\theta,$$
where $s=\tan\theta.$
\end{proof}
By Lemma   \ref{conjugate}, if we want to analyze $(Proj_{\theta}(K_1\times K_2))$, it suffices to consider the set $K_1+sK_2,$ where $s=\tan\theta.$

The infinite iterated function systems (IIFS)  play a pivotal role in this paper,  we first introduce some  definitions  and related  results  of this powerful tool. 

There are two definitions of the invariant set of  IIFS, see for example, \cite{HF},\, \cite{MRD} and \cite{HM}. We adopt Fernau's definition \cite{HF}.
\setcounter{Definition}{2}
\begin{Definition}\label{IIIFS}
Let $\mathcal{A}=\{\phi_{i}(x)=r_ix+a_i: i\in \mathbb{N}^{+},\,0<r_i<1, a_i\in \mathbb{R}\}$. Suppose that  there exists a uniform $0<c<1$ such that for
every $\phi_{i}\in \mathcal{A}$, $|\phi_{i}(x)-\phi_{i}(y)|\leq c
|x-y| $, then  we say $\mathcal{A}$ is an infinite iterated function system. The  unique non-empty compact  set  $J$  is called  the attractor (or invariant set) of $\mathcal{A}$ if
\[J=\overline{\bigcup_{i\in \mathbb{N}}\phi_{i}(J)},\]
where $ \overline{A}$ denotes the closure of $A$. We call  $s_0$,  which is the unique solution of the equation $\sum_{i=1}^{\infty}r_i^s=1$,  the similarity dimension of $J$. 
\end{Definition}
\setcounter{Remark}{1}
 In \cite{MRD}, Mauldin and Urbanski gave another definition of the attractor of IIFS, i.e. 
 $$J_0\triangleq
\bigcup\limits_{\{\phi_{i_n}\}\in\mathcal{A}}\bigcap\limits_{n=1}^{\infty}\phi_{i_1}\circ\phi_{i_2}\cdots\circ
\phi_{i_n}([0,1]),$$
which yields that 
 $J_{0}=\bigcup_{i\in \mathbb{N}}\phi_{i}(J_{0})$. However, for this definition the attractor $J_{0}$ may not be unique  or compact, see  example 1.3 from \cite{HF}. Evidently, $\overline{J_0}=J$. In what follows,   $J_0$ means that the attractor is in the sense of Mauldin and Urbanski while  $J$ refers to Fernau's definition.

 The following result can be found in \cite{MRD,M,HM}. We shall utilize this result to calculate the Hausdorff dimension of $Proj_{\theta}(K_1\times K_2).$
\setcounter{Theorem}{3}
\begin{Theorem}\label{DimensionIIFS}
Let $J_0$ be the attractor of some IIFS with the open set condition, then
\[
\dim_{H}(J_0)=\inf\left\{t:\sum_{i\in \mathbb{N}}r_i^{t}\leq 1\right\}.
\]
\end{Theorem}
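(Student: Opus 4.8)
The plan is to establish the formula by proving the two matching inequalities. Write $P(t)=\sum_{i\in\mathbb{N}}r_i^{t}$, a non-increasing function of $t$ that is strictly decreasing on the set where it is finite, and set $s_0=\inf\{t:P(t)\le 1\}$, so that $P(t)>1$ for $t<s_0$ and $P(t)\le 1$ for $t>s_0$. The goal is $\dim_{H}(J_0)=s_0$, which I split into the upper bound $\dim_{H}(J_0)\le s_0$ and the lower bound $\dim_{H}(J_0)\ge s_0$.

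For the upper bound I would use the natural covering of $J_0$ by cylinder sets. Take the bounded interval $X=[0,1]$ used in the definition of $J_0$; for a word $\omega=(i_1,\dots,i_n)\in\mathbb{N}^{n}$ write $\phi_\omega=\phi_{i_1}\circ\cdots\circ\phi_{i_n}$ and $r_\omega=r_{i_1}\cdots r_{i_n}$, so that $J_0\subseteq\bigcup_{|\omega|=n}\phi_\omega(X)$ and $|\phi_\omega(X)|=r_\omega|X|$. Because Definition \ref{IIIFS} supplies a uniform contraction constant $0<c<1$, every generation-$n$ cylinder has diameter at most $c^{n}|X|$, so for each $\delta>0$ this family is a $\delta$-cover once $n$ is large. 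Fix any $t>s_0$, so that $P(t)<1$; then
\[
\mathcal{H}^{t}_{\delta}(J_0)\le\sum_{|\omega|=n}|\phi_\omega(X)|^{t}=|X|^{t}\Big(\sum_{i\in\mathbb{N}}r_i^{t}\Big)^{n}=|X|^{t}P(t)^{n}\xrightarrow[n\to\infty]{}0.
\]
Hence $\mathcal{H}^{t}(J_0)=0$ and $\dim_{H}(J_0)\le t$; letting $t\downarrow s_0$ gives $\dim_{H}(J_0)\le s_0$.

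For the lower bound I would approximate $J_0$ from within by finite subsystems. Every finite $F\subseteq\mathbb{N}$ generates a finite IFS $\{\phi_i:i\in F\}$ whose attractor $J_F$ is contained in $J_0$, since its codings lie in $F^{\mathbb{N}}\subseteq\mathbb{N}^{\mathbb{N}}$, and the open set witnessing the open set condition for the whole system also witnesses it for the subsystem. Thus the classical Hutchinson--Moran theorem for finite IFS with the open set condition applies and gives $\dim_{H}(J_F)=s_F$, where $s_F$ is the unique solution of $\sum_{i\in F}r_i^{s}=1$. Monotonicity of Hausdorff dimension yields $\dim_{H}(J_0)\ge\dim_{H}(J_F)=s_F$ for every finite $F$, hence $\dim_{H}(J_0)\ge\sup_{F}s_F$. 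It then remains to verify the elementary identity $\sup_{F}s_F=s_0$: for $t<s_0$ one has $P(t)>1$, so some finite $F$ already satisfies $\sum_{i\in F}r_i^{t}>1$, which forces $s_F>t$ and therefore $\sup_{F}s_F\ge s_0$; conversely $1=\sum_{i\in F}r_i^{s_F}\le P(s_F)$ forces $s_F\le s_0$ by the monotonicity of $P$.

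The delicate direction is the lower bound, and the reason for the finite-approximation detour is precisely the feature distinguishing the infinite from the finite setting: when $P(s_0)<1$ (the \emph{irregular} regime, in which the infimum is not attained by any probability vector $p_i=r_i^{s_0}$), one cannot build a self-similar Frostman measure of the correct exponent directly on $J_0$ and invoke the mass distribution principle. Exhausting $\mathbb{N}$ by finite sets sidesteps this obstruction at the cost of calling on the finite-dimensional theorem and the convergence $s_F\uparrow s_0$. Consequently, the technical heart of the argument is to confirm that the open set condition is genuinely inherited by every finite subsystem and that the approximating attractors $J_F$ fill out $\dim_{H}(J_0)$ in the limit, which is exactly the point carried out in the work of Mauldin and Urbanski \cite{MRD}.
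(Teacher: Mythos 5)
Your argument is correct, and it is essentially the standard proof of this result: the paper itself offers no proof, quoting the theorem from \cite{MRD,M,HM}, and the route you take --- covering by generation-$n$ cylinders for the upper bound, exhausting by finite subsystems (which inherit the open set condition) together with the identity $\sup_F s_F=\inf\{t:\sum_i r_i^t\le 1\}$ for the lower bound --- is precisely the argument of Mauldin and Urba\'nski specialized to similarities. The only point worth tightening is the last step: from $1=\sum_{i\in F}r_i^{s_F}$ you in fact get the strict inequality $P(s_F)>1$ (the full sum has infinitely many positive extra terms), and it is this strict inequality, combined with the monotonicity of $P$, that cleanly yields $s_F\le s_0$.
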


The following definitions are defined in a natural way. 
\setcounter{Definition}{4}
\begin{Definition}
Let $\Sigma=\{s_1,\cdots, s_p \}$, where  $s_i\in \mathbb{R}, 1\leq i\leq p$. 
 Let $d_1d_2\cdots d_k$ and $c_1c_2\cdots c_k$ be two blocks from $\{s_1,\cdots,s_p\}^{k}$.  We say  the block  $d_1d_2\cdots d_k$ is of length $k$. Define the concatenation of $d_1d_2\cdots d_k$ and $c_1c_2\cdots c_k$ by   $$(d_1d_2\cdots d_k)* (c_1c_2\cdots c_k)=d_1d_2\cdots d_kc_1c_2\cdots c_k.$$ The sum of $d_1d_2\cdots d_k$ and $c_1c_2\cdots c_k$ is defined by $(d_1+c_1)(d_2+c_2)\cdots (d_k+c_k)$.  The concatenation of $t\in \mathbb{N}$ blocks  with   $\hat{P}_{1}=d_1d_2\cdots d_k $
  is denoted by  \[\hat{P}_{1}^{t}=\underbrace{\hat{P}_{1}*\hat{P}_{1}*\cdots*\hat{P}_{1}}_\text{$t$ times}.\] The value of the block
$\hat{P}_{1}=d_1d_2\cdots d_k$ in base $\beta>1$ is defined as \[(d_1d_2\cdots d_k)_{\beta}= \dfrac{d_1}{\beta}+\dfrac{d_2}{\beta^2}+\cdots+\dfrac{d_k}{\beta^k}.\]
Similarly, given $(d_n)\in \{s_1\cdots, s_p \}^{\mathbb{N}}$, define  $$(d_n)_{\beta}=\sum\limits_{n=1}^{\infty}\dfrac{d_n}{\beta^n}.$$
\end{Definition}
  Recall that  $D_2=\{\hat{Q}_{1},\, \hat{Q}_{2},\,\cdots,\,\hat{Q}_{m}\}$, where   $\hat{Q}_{j}=(\underbrace{000\cdots 0}_{m_j-1}b_j^{'}
)$ and $b_j^{'}=\beta^{m_j}b_j$. 
We define a new  set $D_2^{\prime}=\{s\hat{Q}_{1},\, s\hat{Q}_{2},\,\cdots,\,s\hat{Q}_{m}\}$, where $s\hat{Q}_{j}=(\underbrace{000\cdots 0}_{m_j-1}sb_j^{'}
)$ and $b_j^{'}=\beta^{m_j}b_j$, $s=\tan\theta$ is from Lemma \ref{conjugate}. 
The following definition was  essentially given in \cite{SumKan}, we slightly modify the definition. 
\begin{Definition}\label{Matching}
Take $u$ blocks
\[\hat{P}_{i_1},\,\hat{P}_{i_2},\,\hat{P}_{i_3},\,\cdots,\,
\hat{P}_{i_{u}}\] from $D_1$ with  lengths
$p_{1},\,p_{2},\,p_{3},\,\cdots,\, p_{u}$, respectively. Pick $v$
blocks \[s\hat{Q}_{j_1},\,s\hat{Q}_{j_2},\,s\hat{Q}_{j_3},\,\cdots,\,
s\hat{Q}_{j_{v}}\] from $D_2^{\prime}$ with lengths
$q_{1},\,q_{2},\,q_{3},\,\cdots ,\,q_{v}$, respectively.  If there exist
integers
 $k_{1},\,k_{2},\,k_{3},\cdots,
k_{u}$,\\ $l_{1},\,l_{2},\,l_{3},\cdots, l_{v}$ such that
\[
\sum_{i=1}^{u}k_{i}p_{i}=\sum_{j=1}^{v}l_{j}q_{j},
\]
then we call $A+B$ 
 a Matching   with respect to $\beta$, where 
 $$A =\hat{P}_{i_{t_1}}\ast \hat{P}_{i_{t_2}}\ast\cdots\ast
\hat{P}_{i_{t_u}},$$
and  there are precisely $k_p$  $\hat{P}_{i_{p}}$'s in the concatenation $\hat{P}_{i_{t_1}}\ast \hat{P}_{i_{t_2}}\ast\cdots\ast
\hat{P}_{i_{t_u}}$,
 $$B=s\hat{Q}_{j_{w_1}}\ast s\hat{Q}_{j_{w_2}}\ast\cdots\ast
s\hat{Q}_{j_{w_v}},$$ 
and  there are precisely $l_q$  $(s\hat{Q}_{j_{q}})$'s in the  concatenation $s\hat{Q}_{j_{w_1}}\ast s\hat{Q}_{j_{w_2}}\ast\cdots\ast
s\hat{Q}_{j_{w_v}},$
 where $1\leq p\leq u$, $t_i\in\{1,2\cdots,u\}, 1\leq i\leq u$, $1\leq q\leq v$, $w_j\in\{1,2\cdots,v\}, 1\leq j\leq v$.
\end{Definition}
\setcounter{Remark}{6}
\begin{Remark}
In \cite{SumKan}, the definition of Matching is incorrect. We need a little modification.  Due to the condition $\sum_{i=1}^{u}k_{i}p_{i}=\sum_{j=1}^{v}l_{j}q_{j},$ it follows that the lengths  of $A$ and   $B$  coincide. Therefore, we can define the sum of these two concatenated blocks.  A Matching is also a  block which is the sum of some concatenated blocks from $D_1$ and $D_2^{\prime}$, respectively. To avoid some unnecessary Matchings,  in what follows, we always  obey the following rule,   i.e.  if the new born
Matchings can be concatenated by the old Matchings,
then we do not choose these new Matchings.
\end{Remark}
\setcounter{Example}{7}
\begin{Example}
 Given  $\beta>1$.  Let $K_1=K_2$ be the attractor of the IFS $$\left\{f_1(x)=\dfrac{x}{\beta^4},f_2(x)=\dfrac{x+\beta^8-1}{\beta^8}\right\}.$$
Denote $A=\beta^8-1,B=sA, C=A+B, s=\tan\theta$.
$$D_1=\{(0000), (0000000A)\}, D_2^{\prime}=\{(0000), (0000000B)\}.$$ 
All the Matchings generated by $D_1$ and $D_2^{\prime}$ is 
 $$D=\{(0000), (0000000A), (0000000B),(0000000C), (0000000B000A),(0000000A000B),\cdots\}.$$
 Note that in this example the lengths of the Matchings should be $4k, k\in \mathbb{N}^{+}$ due to the lengths of blocks from $D_1$ and $D_2^{\prime}$.  Clearly, the block $$(00000000000A)=(0000)\ast(0000000A),$$ i.e. the block $(00000000000A)$ can be concatenated by another two Matchings. For such case, we do not take $(00000000000A)$ as a Matching. 
\end{Example}
The following result can be found in \cite{SumKan}.  
\setcounter{Lemma}{8}
\begin{Lemma}\label{GenerateMatchings}
The cardinality of Matchings which are generated by $D_1$ and $D_2^{\prime}$ is at most countable.
\end{Lemma}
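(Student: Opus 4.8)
The plan is to recognize every Matching as a finite word over one fixed finite alphabet, and then to apply the elementary fact that the set of all finite words over a finite alphabet is countable.

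First I would identify this alphabet. By Definition \ref{Matching}, a Matching has the form $A+B$, where $A=\hat{P}_{i_{t_1}}\ast\cdots\ast\hat{P}_{i_{t_u}}$ is a finite concatenation of blocks from $D_1=\{\hat{P}_1,\ldots,\hat{P}_n\}$ and $B=s\hat{Q}_{j_{w_1}}\ast\cdots\ast s\hat{Q}_{j_{w_v}}$ is a finite concatenation of blocks from $D_2^{\prime}=\{s\hat{Q}_1,\ldots,s\hat{Q}_m\}$. Each $\hat{P}_i$ has all of its entries equal to $0$ except for the single entry $a_i'=\beta^{n_i}a_i$, so every coordinate of the concatenation $A$ lies in the finite set $\{0,a_1',\ldots,a_n'\}$; likewise every coordinate of $B$ lies in $\{0,sb_1',\ldots,sb_m'\}$. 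The matching condition $\sum_{i=1}^{u}k_ip_i=\sum_{j=1}^{v}l_jq_j$ forces $A$ and $B$ to have the same length, so the sum $A+B$ is taken coordinatewise and each of its entries belongs to the finite set
\[
\Sigma:=\{a+b:a\in\{0,a_1',\ldots,a_n'\},\ b\in\{0,sb_1',\ldots,sb_m'\}\},
\]
which has at most $(n+1)(m+1)$ elements.

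Next I would observe that a Matching is therefore nothing but a finite string over the fixed finite alphabet $\Sigma$. Writing $\Sigma^k$ for the collection of strings of length $k$ over $\Sigma$, the set of all Matchings is contained in $\bigcup_{k\ge 1}\Sigma^k$. Each $\Sigma^k$ is finite (with $|\Sigma|^k$ elements), so $\bigcup_{k\ge 1}\Sigma^k$ is a countable union of finite sets, hence countable. Being a subset of a countable set, the collection of Matchings is at most countable.

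Since the argument is just a finiteness-of-alphabet observation, I do not expect a genuine obstacle. The one point deserving care is that $\Sigma$ must be seen to be fixed and finite \emph{independently} of the length or combinatorial complexity of the Matching: this holds because the entries of $A$ and of $B$ are each confined to finite sets determined solely by the original data $D_1$ and $D_2^{\prime}$, and the coordinatewise sum is well defined precisely because the matching condition equalizes the lengths of $A$ and $B$. Granting this, countability follows at once.
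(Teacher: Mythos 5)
Your argument is correct. The paper does not actually prove this lemma---it simply cites \cite{SumKan}---but your proof is the natural one: every Matching of length $k$ is a word in $\Sigma^{k}$ for the fixed finite alphabet $\Sigma=\{a+b: a\in\{0,a_1',\ldots,a_n'\},\ b\in\{0,sb_1',\ldots,sb_m'\}\}$, and $\bigcup_{k\geq 1}\Sigma^{k}$ is a countable union of finite sets. (The convention of Remark 2.7, which discards Matchings that are concatenations of older ones, only shrinks the collection, so the bound is unaffected.)
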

Denote all the Matchings by
\[D=\{\hat{R}_{1} ,\,\hat{R}_{2},\,\cdots,\,\hat{R}_{n-1},\,
\hat{R}_{n},\cdots\}. \]  Since each Matching determines a similitude with respect to $\beta$ (the approach is the same as we identify each similitude of $K_1$ with some block $\hat{P}_i$), it follows that  $D$ uniquely determines a set of similitudes
$\Phi^{\infty}\triangleq\{\phi_1,\,\phi_2,\,\phi_3,\,\phi_4,\cdots\}
$. If the cardinality of $D$ is finite, then $K_1+sK_2$ is clearly a self-similar set. We will prove this fact in the next subsection.  If $\sharp D$ is infinite, then we define 
$$E\triangleq
\bigcup\limits_{\{\phi_{i_n}\}\in\Phi^{\infty}}\bigcap\limits_{n=1}^{\infty}\phi_{i_1}\circ\phi_{i_2}\cdots\circ
\phi_{i_n}([0,1]),$$ and  $E$ is a solution of the equation
$E=\bigcup\limits_{i\in \mathbb{N}}\phi_{i}(E)$, \cite{MRD}.  
\subsection{Proof of Theorem \ref{Main}}
First we  assume that the cardinality of all Matchings is infinitely countable.
  In Lemma \ref{coding} we give a new  definition of the codings of $K_i$, $1\leq i\leq 2$.  For any  $x+sy\in K_1+sK_2$, 
 we denote the coding of $x+sy$ by $(x_n+sy_n)_{n=1}^{\infty}$, where $(x_n)$ and $(y_n)$ are the codings of $x$ and  $y$, respectively.  By Lemma \ref{coding}, 
We know that $(x_n)$ ($(sy_n)$) can be decomposed into infinite blocks from $D_1$($D_2^{\prime}$), 
namely, $(x_n)=X_1\ast X_2\ast \cdots$ and $(sy_n)=sY_1\ast sY_2\ast \cdots.$

Let $(a_n)_{n=1}^{\infty}=(x_n+sy_n)$ be a coding of some point $x+sy\in K_1+sK_2$,  where $(x_n)^{\infty}_{n=1}$ and $(y_n)^{\infty}_{n=1}$ are the codings of $x\in K_1$ and $y\in K_2$, respectively. Given $k>0$,  we call $(c_{i_1}c_{i_2}\cdots c_{i_k})$  a word of $(a_{i})_{i=1}^{\infty}$  with length $k$ if  there exists  some $j>0$ such that  $c_{i_1}c_{i_2}\cdots c_{i_k}=a_{j+1}\cdots a_{j+k}$. Let 
 \begin{align*}
C=\Big\{(a_n)=(x_n+sy_n): &\textrm{ there exists some } \,N\in \mathbb{N}^{+}\,\textrm{such that any word of}\\
& (a_{N+i})_{i=1}^{\infty} \textrm{ is not a Matching}\Big\}.
\end{align*}
\setcounter{Lemma}{9}
\begin{Lemma}\label{app}
Let $(a_n)\in C$, for any $\epsilon>0$ we can find a coding $(b_n)_{n=1}^{\infty}$ which is the concatenation of infinite Matchings such that
\[|(a_n)_{\beta}-(b_n)_{\beta} |< \epsilon.\]
\end{Lemma}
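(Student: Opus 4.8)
The plan is to leave the value $(a_n)_\beta=x+sy$ untouched on a long prefix and to replace the tail by a periodic repetition of a single Matching; the whole difficulty is that, by the very definition of $C$, the given coding has no common block boundary past $N$, so the Matching structure cannot be read off from $(a_n)$ itself and must be manufactured by passing to a \emph{different} coding.

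First I would record the digit-level meaning of a Matching. By Lemma \ref{coding} a coding $(a_n)=(x_n+sy_n)$ carries two independent decompositions, $(x_n)=X_1\ast X_2\ast\cdots$ with $X_i\in D_1$ and $(sy_n)=sY_1\ast sY_2\ast\cdots$ with $sY_i\in D_2^{\prime}$. Write $\mathcal B_1,\mathcal B_2$ for the two sets of cumulative block lengths (the positions where a $D_1$-block, resp. a $D_2^{\prime}$-block, ends). Unwinding Definition \ref{Matching}, a word $a_{p+1}\cdots a_q$ of the given coding is a Matching once $p,q\in(\mathcal B_1\cap\mathcal B_2)\cup\{0\}$: at such endpoints the $x$-part is a concatenation of $D_1$-blocks and the $sy$-part a concatenation of $D_2^{\prime}$-blocks of the same total length, which is exactly the length condition $\sum_i k_i p_i=\sum_j l_j q_j$. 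Hence $(a_n)\in C$ forces $\mathcal B_1\cap\mathcal B_2$ to have at most one point past $N$; this is precisely why the naive idea of reusing existing common boundaries fails and a new one has to be created.

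Next I would fix $N$ so large that $2D_{\max}\,\beta^{-N}/(\beta-1)<\epsilon$, where $D_{\max}$ bounds the finitely many possible digit values, so that \emph{any} coding agreeing with $(a_n)$ on $a_1\cdots a_N$ is within $\epsilon$ of $(a_n)_\beta$ in value. I then build such a coding that is a concatenation of Matchings. Take complete $D_1$-blocks $X_1\ast\cdots\ast X_r$ covering $[1,p]$ with $p\in[N,N+\max_i p_i)$, and complete $D_2^{\prime}$-blocks $sY_1\ast\cdots\ast sY_t$ covering $[1,q]$ with $q\in[N,N+\max_j q_j)$; on $[1,\min(p,q)]\supseteq[1,N]$ these digits coincide with $a_1\cdots a_N$. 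The key alignment step extends both sides to a common length $M$: putting $d_1=\gcd\{p_i\}$ and $d_2=\gcd\{q_j\}$, the reachable further lengths $p+\langle p_i\rangle$ and $q+\langle q_j\rangle$ each contain all sufficiently large multiples of $d_1$, resp. $d_2$ (Frobenius), so any large multiple $M$ of $\mathrm{lcm}(d_1,d_2)$ lies in both. Choosing one makes $M-p\in\langle p_i\rangle$ and $M-q\in\langle q_j\rangle$, and I pad each side with $D_1$-, resp. $D_2^{\prime}$-blocks of those total lengths. Now $[1,M]$ is simultaneously tiled by $D_1$-blocks and by $D_2^{\prime}$-blocks of equal total length $M$, so the word $\hat R_0$ it carries is a Matching in the sense of Definition \ref{Matching}.

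Finally I would set $(b_n)=\hat R_0\ast\hat R_0\ast\hat R_0\ast\cdots$, a concatenation of infinitely many Matchings and hence an admissible coding; since it agrees with $(a_n)$ on the first $N$ digits, $|(a_n)_\beta-(b_n)_\beta|\le 2D_{\max}\sum_{n>N}\beta^{-n}<\epsilon$, as required. I expect the alignment step to be the only genuine obstacle: the hypothesis $(a_n)\in C$ rules out any common boundary already present in the coding, so the common boundary must be produced from scratch, and this is exactly where the numerical-semigroup argument — guaranteeing that large multiples of $\mathrm{lcm}(d_1,d_2)$ are reachable by padding on both sides — is indispensable. Everything else reduces to the routine geometric-series tail estimate.
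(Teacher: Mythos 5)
Your proof is correct and follows essentially the same strategy as the paper's: fix a prefix long enough that agreement on it forces the values to be within $\epsilon$, extend the two block decompositions of $(x_n)$ and $(sy_n)$ covering that prefix to a common total length so that the resulting word is a Matching (or a concatenation of Matchings) beginning with $a_1\cdots a_N$, and then continue with an arbitrary infinite concatenation of Matchings. The only difference is in the alignment step: the paper reaches a common length by repeating the length-$k_1$ concatenation $X_1\ast\cdots\ast X_p$ exactly $k_2$ times and the length-$k_2$ concatenation $sY_1\ast\cdots\ast sY_q$ exactly $k_1$ times (both then have length $k_1k_2$), whereas you pad with freshly chosen blocks via a numerical-semigroup (Frobenius) argument; both are valid, and the paper's product trick is the more elementary of the two.
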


\begin{proof}
Let $(a_n)\in C$. For any  $\epsilon>0$,  there exists some  $n_0\in \mathbb{N}$ such that  $\beta^{-n_0}< \epsilon$. We will define some   $(b_n)_{n=1}^{\infty}$ such that its value in base $\beta$ is a point of $E$.  

\textbf{Case 1.} 
Suppose  $a_1a_2a_3\cdots a_{n_0}$ is a Matching or a concatenation of some Matchings, then we can choose any $(b_{n_0+i})_{i=1}^{\infty}$ that is the concatenation of infinite  Matchings. Therefore, 
\[
|(a_n)_{\beta}-(b_n)_{\beta} |= |(a_{n_0+1}a_{n_0+2}a_{n_0+3}\cdots)_{\beta}-(b_{n_0+1}b_{n_0+2}b_{n_0+3}\cdots)_{\beta}|\leq M \sum_{i=n_0+1}^{\infty}\beta^{-i}=M^{\prime}\epsilon,
\]
where $M, M^{\prime}$ are  positive constants. 
Therefore,  we have  proved that there exists some point $ b\in E$ such that
\[|(a_n)_{\beta}-(b_n)_{\beta} |< \epsilon.\]
\textbf{Case 2.} 
If $a_1a_2a_3\cdots a_{n_0}$ is not a concatenation of some Matchings,  by  virtue of the definition of $(a_n)$, 
$(a_n)=(x_n+sy_n)$, where $(x_n)=(X_1\ast X_2\ast \cdots), (sy_n)=(sY_1\ast sY_2\ast \cdots) $ are the codings of some  points in $K_1$ and $K_2$, respectively.  Suppose that there exist $p, q$ such that $a_1a_2a_3\cdots a_{n_0}$
 is a prefix of $(X_1\ast X_2\ast \cdots \ast X_p)+(sY_1\ast sY_2\ast \cdots \ast sY_q)$, the lengths of $X_1\ast X_2\ast \cdots \ast X_p$ and $sY_1\ast sY_2\ast \cdots \ast sY_q$ may not coincide. Nevertheless, we may still  define the summation of their common prefixes. Assume that the length of $X_1\ast X_2\ast \cdots \ast X_p$   and   $sY_1\ast sY_2\ast \cdots \ast sY_q$ are  $k_1$ and $k_2$, respectively. Then $$(X_1\ast X_2\ast \cdots \ast X_p)^{k_2}+(sY_1\ast sY_2\ast  \cdots \ast sY_q)^{k_1}$$ is a Matching or a concatenation of some Matchings as the blocks $(X_1\ast X_2\ast \cdots \ast X_p)^{k_2}$ and $(sY_1\ast sY_2\ast  \cdots \ast sY_q)^{k_1}$ have the same length.  Moreover, the initial $n_0$ digits of  $(X_1\ast X_2\ast \cdots \ast X_p)^{k_2}+(sY_1\ast sY_2\ast  \cdots \ast sY_q)^{k_1}$ is $a_1a_2a_3\cdots a_{n_0}$.  Now, we can  make use of the idea in the first case. 
\end{proof}
\begin{Lemma}\label{Closure}
$\overline{E}=K_1+sK_2$.
\end{Lemma}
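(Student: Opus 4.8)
The plan is to establish the two inclusions $\overline{E}\subseteq K_1+sK_2$ and $K_1+sK_2\subseteq \overline{E}$ separately, the former being essentially structural and the latter resting on the approximation provided by Lemma~\ref{app}.

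For the inclusion $\overline{E}\subseteq K_1+sK_2$, I would first argue that $E\subseteq K_1+sK_2$ and then invoke compactness. A point of $E$ admits a coding that is an infinite concatenation of Matchings $\hat{R}_{i_1}\ast\hat{R}_{i_2}\ast\cdots$. By Definition~\ref{Matching} (and the accompanying remark that the two summands have equal length), each $\hat{R}_{i_k}$ is the digitwise sum $A_k+B_k$ of a concatenation $A_k$ of blocks from $D_1$ and a concatenation $B_k$ of blocks from $D_2^{\prime}$ of the same length. Concatenating over $k$ therefore decomposes the coding as $(x_n)+(sy_n)$, where $(x_n)=A_1\ast A_2\ast\cdots$ is a coding of some $x\in K_1$ and $(sy_n)=B_1\ast B_2\ast\cdots$ satisfies $(sy_n)_{\beta}=s\,(y_n)_{\beta}=sy$ for some $y\in K_2$ (using Lemma~\ref{coding}). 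Hence every point of $E$ has the form $x+sy\in K_1+sK_2$. Since $K_1$ and $K_2$ are compact, $K_1+sK_2$ is compact, hence closed, and $\overline{E}\subseteq K_1+sK_2$ follows.

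For the reverse inclusion I would show that $E$ is dense in $K_1+sK_2$. Fix $z\in K_1+sK_2$ and write $z=x+sy$ with a sum-coding $(a_n)=(x_n+sy_n)$. If $(a_n)\in C$, then Lemma~\ref{app} directly produces, for every $\epsilon>0$, a point of $E$ within $\epsilon$ of $z$, so $z\in\overline{E}$. If $(a_n)\notin C$, I would run the same truncation-and-matching construction used in the proof of Lemma~\ref{app}: for large $n_0$ with $\beta^{-n_0}<\epsilon$, take the decompositions $(x_n)=X_1\ast X_2\ast\cdots$ and $(sy_n)=sY_1\ast sY_2\ast\cdots$, choose $p,q$ so that $a_1\cdots a_{n_0}$ is a prefix of $(X_1\ast\cdots\ast X_p)+(sY_1\ast\cdots\ast sY_q)$, and equalize the two lengths by passing to the repeated blocks $(X_1\ast\cdots\ast X_p)^{k_2}$ and $(sY_1\ast\cdots\ast sY_q)^{k_1}$; their digitwise sum is a concatenation of Matchings whose first $n_0$ digits are $a_1\cdots a_{n_0}$. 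Appending any infinite concatenation of Matchings yields a coding $(b_n)$ of a point of $E$ with $|(a_n)_{\beta}-(b_n)_{\beta}|\le M\beta^{-n_0}$ for a constant $M$ depending only on the digit bounds. Letting $n_0\to\infty$ gives $z\in\overline{E}$, completing $K_1+sK_2\subseteq\overline{E}$.

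I expect the main obstacle to be the non-$C$ case of the second inclusion: one must guarantee that an arbitrary finite prefix $a_1\cdots a_{n_0}$ of a sum-coding can always be realized as the prefix of a genuine concatenation of Matchings, even when Matchings recur arbitrarily far out in $(a_n)$. This is exactly the length-equalization device of Lemma~\ref{app}, where replacing $X_1\ast\cdots\ast X_p$ and $sY_1\ast\cdots\ast sY_q$ by their $k_2$-fold and $k_1$-fold repetitions forces the two factors to share a common length and hence to sum to a Matching. Once this combinatorial step is in hand the metric estimate is routine, so the essential content of the lemma is the matching construction together with the compactness of $K_1+sK_2$ used in the first inclusion.
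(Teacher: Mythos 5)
Your proof is correct, and its engine is the same as the paper's: density of $E$ in $K_1+sK_2$ via the truncation-and-length-equalization device of Lemma~\ref{app}. There are, however, two places where you are more careful than the printed proof, and both are worth keeping. First, the paper's proof of this lemma only argues $K_1+sK_2\subseteq\overline{E}$; the inclusion $E\subseteq K_1+sK_2$ is left entirely implicit, whereas you derive it explicitly from the structure of a Matching as a digitwise sum $A_k+B_k$ of equal-length concatenations over $D_1$ and $D_2^{\prime}$, plus compactness of $K_1+sK_2$. Second, the paper's case split for density is ``either some subsequence of prefixes $a_1\cdots a_{n_k}$ is a concatenation of Matchings (whence $x+sy\in E$), or $(a_n)\in C$''; as literally stated this dichotomy is not exhaustive, since a coding can contain Matchings as words arbitrarily far out without any prefix being a concatenation of Matchings. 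Your split ($(a_n)\in C$ versus $(a_n)\notin C$, with the Case~2 construction of Lemma~\ref{app} rerun in the second branch --- a construction that never actually uses membership in $C$) is exhaustive and closes that gap. The one point to state explicitly is that $p,q$ are chosen so that both $X_1\ast\cdots\ast X_p$ and $sY_1\ast\cdots\ast sY_q$ have length at least $n_0$, so that the first $n_0$ digits of the equalized sum really are $a_1\cdots a_{n_0}$; with that said, the argument is complete.
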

\begin{proof} For any $ \epsilon>0$ and  any $
x+sy\in K_1+sK_2$,  we can find a coding $(a_n)$ such that  $x+sy=\sum\limits_{n=1}^{\infty}a_n\beta^{-n}$. If there exists a subsequence of integer $n_{k}\to \infty$ such that $(a_1a_2a_3 \cdots a_{n_k})$ is  always a concatenation of some Matchings,  then by the definition of $$E=
\bigcup\limits_{\{\phi_{i_n}\}\in\Phi^{\infty}}\bigcap\limits_{n=1}^{\infty}\phi_{i_1}\circ\phi_{i_2}\cdots
\phi_{i_n}([0,1])$$  it follows that  $x+y\in E$.  If  $(a_n)\in C$,  by Lemma \ref{app} there
exists $b\in E$ such that $|b-x-y|<\epsilon$.
\end{proof}
\begin{Lemma}\label{IIFS}
$\overline{\bigcup\limits_{i\in \mathbb{N}^{+}}\phi_{i}(K_1+sK_2)}=K_1+sK_2$.
\end{Lemma}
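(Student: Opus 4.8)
The plan is to deduce this identity from two facts already in hand: the self-reproducing equation $E=\bigcup_{i\in\mathbb{N}^{+}}\phi_i(E)$ coming from the Mauldin--Urbanski construction, and Lemma \ref{Closure}, which identifies $\overline{E}=K_1+sK_2$. Writing $J:=K_1+sK_2$, the lemma asserts that this compact set is the attractor, in Fernau's sense (Definition \ref{IIIFS}), of the IIFS $\Phi^{\infty}=\{\phi_i\}$. So the whole argument reduces to checking the two set inclusions and, as a preliminary, verifying that $\Phi^{\infty}$ really is an IIFS with a uniform contraction bound.

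First I would record the uniform-contraction point needed to legitimize the use of Fernau's framework. Each $\phi_i$ is the similitude attached to a Matching $\hat{R}_i$, hence has ratio $\beta^{-\ell_i}$, where $\ell_i$ is the length of $\hat{R}_i$. Since every Matching has length a positive integer bounded below by the shortest block length appearing in $D_1\cup D_2^{\prime}$, there is some $\ell_{\min}\geq 1$ with $\ell_i\geq\ell_{\min}$ for all $i$, so every ratio is bounded by $c:=\beta^{-\ell_{\min}}<1$. This gives the uniform $c$ required by Definition \ref{IIIFS}; note also that $J=K_1+sK_2$ is already compact as a scaled sum of two compact self-similar sets.

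For the inclusion $\overline{\bigcup_i\phi_i(J)}\subseteq J$, I would show $\phi_i(J)\subseteq J$ for every $i$. Since each $\phi_i$ is continuous, $\phi_i(J)=\phi_i(\overline{E})\subseteq\overline{\phi_i(E)}$, using the elementary fact that $f(\overline{A})\subseteq\overline{f(A)}$ for continuous $f$; and because $\phi_i(E)\subseteq\bigcup_j\phi_j(E)=E$, we get $\overline{\phi_i(E)}\subseteq\overline{E}=J$. Hence $\bigcup_i\phi_i(J)\subseteq J$, and as $J$ is closed its closure still lies in $J$. For the reverse inclusion, $E\subseteq J$ gives $\phi_i(E)\subseteq\phi_i(J)$, so $E=\bigcup_i\phi_i(E)\subseteq\bigcup_i\phi_i(J)$; taking closures and invoking Lemma \ref{Closure} yields $J=\overline{E}\subseteq\overline{\bigcup_i\phi_i(J)}$. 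Combining the two inclusions finishes the proof.

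The argument is essentially formal once Lemma \ref{Closure} and the fixed-point identity for $E$ are available, so I do not anticipate a serious obstacle here; the only place demanding any care is the interchange of $\phi_i$ with closures, which is exactly the continuity fact $f(\overline{A})\subseteq\overline{f(A)}$ applied to each contraction $\phi_i$. The genuine work of this subsection has already been carried out in Lemma \ref{app} and Lemma \ref{Closure}; the present lemma simply packages $K_1+sK_2$ as the Fernau attractor of $\Phi^{\infty}$, thereby completing the infinite-Matching case of Theorem \ref{Main}.
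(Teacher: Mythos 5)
Your proposal is correct and follows essentially the same route as the paper: both inclusions are derived from the fixed-point identity $E=\bigcup_{i}\phi_{i}(E)$ together with Lemma \ref{Closure} ($\overline{E}=K_1+sK_2$), using the interchange of $\phi_i$ with closures in one direction and monotonicity plus taking closures in the other. The only additions beyond the paper's argument are the (harmless) preliminary check of a uniform contraction bound and the observation that $K_1+sK_2$ is compact, which the paper leaves implicit.
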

\begin{proof}
 Since $$E=
\bigcup\limits_{\{\phi_{i_n}\}\in\Phi^{\infty}}\bigcap\limits_{n=1}^{\infty}\phi_{i_1}\circ\phi_{i_2}\cdots
\phi_{i_n}([0,1])$$ it follows that  $E=\bigcup\limits_{i\in \mathbb{N}^{+}}\phi_{i}(E)$, which yields that \[
\overline{E}=\overline{\bigcup\limits_{i\in \mathbb{N}^{+}}\phi_{i}(E)}=\overline{\overline{\bigcup\limits_{i\in \mathbb{N}^{+}}\phi_{i}(E)}}\supseteq \overline{\bigcup\limits_{i\in \mathbb{N}^{+}}\overline{\phi_{i}(E)} }=\overline{\bigcup\limits_{i\in \mathbb{N}^{+}}\phi_{i}(K_1+sK_2)}, \] i.e. we have
\[\overline{\bigcup\limits_{i\in \mathbb{N}^{+}}\phi_{i}(K_1+sK_2)} \subseteq K_1+sK_2.\]
Conversely, 
$E=\bigcup\limits_{i\in \mathbb{N}^{+}}\phi_{i}(E)\subseteq \bigcup\limits_{i\in \mathbb{N}^{+}}\phi_{i}(K_1+sK_2)$,  by Lemma \ref{Closure}
it follows that 
$$  K_1+sK_2\subset  \overline{\bigcup\limits_{i\in \mathbb{N}^{+}}\phi_{i}(K_1+sK_2)}.$$
\end{proof}
\begin{proof}[Proof of Theorem \ref{Main}:]
 Lemma \ref{GenerateMatchings} states that  there are at most countably many Matchings generated by $D_1$ and $D_2^{\prime}$. Suppose that  the cardinality of Matchings is infinitely  countable, then by Lemma \ref{IIFS}, $K_1+sK_2$ is an attractor of $\Phi^{\infty}$. If the cardinality is finite, then $K_1+sK_2$ is a self-similar set. The proof is  similar to Lemmas \ref{Closure} and \ref{app}. 
For this case we may not   approximate  the coding of $x+sy \in K_1+sK_2$. Indeed,  we can  directly find  a coding which is the concatenation of  infinite Matchings such that the value of this infinite coding is $x+sy$, i.e. $E=K_1+sK_2$. 
\end{proof}
Therefore, in terms of Mauldin and Urbanski's result \cite{MRD}, Lemmas \ref{IIFS} and \ref{conjugate}, we have 
\setcounter{Proposition}{12}
\begin{Proposition}\label{P=B}
For any $\theta\in[0,\pi)$, $$ \dim_{P}(Proj_{\theta}(K_1\times K_2))= \overline{\dim}_{B}(Proj_{\theta}(K_1\times K_2)).$$
\end{Proposition}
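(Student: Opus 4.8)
The plan is to reduce the statement, via Lemma \ref{conjugate}, to proving $\dim_P F=\overline{\dim}_B F$ for the compact set $F:=K_1+sK_2$ with $s=\tan\theta$ (and $F=K_2$ when $\theta=\pi/2$, which is already self-similar). By Theorem \ref{Main} together with Lemma \ref{IIFS}, $F$ satisfies the self-referential identity
$$F=\overline{\bigcup_{i}\phi_i(F)},$$
where the $\phi_i$ are similarities of ratios $r_i\in(0,c]$ with $c<1$, the family being finite in the self-similar case and countable in the infinite-iterated-function-system case. One inequality is automatic: every bounded subset of $\R$ satisfies $\dim_P F\le\overline{\dim}_B F$ \cite{FG}, so the entire content of the proposition is the reverse inequality $\overline{\dim}_B F\le\dim_P F$.

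To obtain it I would invoke the modified box-counting description of packing dimension,
$$\dim_P F=\inf\left\{\sup_i\overline{\dim}_B F_i:\ F\subseteq\bigcup_{i=1}^{\infty}F_i\right\},$$
from \cite{FG}, and show that in \emph{every} countable cover some piece already carries the full upper box dimension of $F$. Since $\overline{\dim}_B A=\overline{\dim}_B\overline{A}$, I may assume each $F_i$ is closed. As $F$ is compact (Lemma \ref{IIFS}), hence a complete metric space, the Baire category theorem produces an index $i_0$ and an open set $U\subseteq\R$ with $\emptyset\ne F\cap U\subseteq F_{i_0}$; it therefore suffices to prove $\overline{\dim}_B(F\cap U)\ge\overline{\dim}_B F$ for an arbitrary relatively open piece.

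The crux is the homogeneity of $F$: every nonempty relatively open $F\cap U$ contains a rescaled copy $\phi_w(F)$ of $F$. Writing $D=\operatorname{diam}F$ and $G_n=\bigcup_{|w|=n}\phi_w(F)$ for the generation-$n$ pieces, one has $\phi_w(F)\subseteq F$ (since $\phi_i(F)\subseteq F$) with $\operatorname{diam}\phi_w(F)\le c^{\,n}D$, and a short induction from $F=\overline{\bigcup_i\phi_i(F)}$ shows each $G_n$ is dense in $F$. Given $z\in F\cap U$ and a ball $B(z,\delta)\subseteq U$, choose $n$ with $c^{\,n}D<\delta/2$ and then, by density of $G_n$, a word $w$ of length $n$ whose piece $\phi_w(F)$ meets $B(z,\delta/2)$; the diameter bound then forces $\phi_w(F)\subseteq B(z,\delta)\cap F\subseteq F\cap U$. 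Since upper box dimension is a similarity invariant, this yields
$$\overline{\dim}_B F_{i_0}\ge\overline{\dim}_B(F\cap U)\ge\overline{\dim}_B\phi_w(F)=\overline{\dim}_B F,$$
so $\sup_i\overline{\dim}_B F_i\ge\overline{\dim}_B F$ for every cover, whence $\dim_P F\ge\overline{\dim}_B F$ and the proposition follows.

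The main difficulty I anticipate lies entirely in the infinite case and is twofold. First, one must guarantee that the dense pieces in $G_n$ genuinely shrink in diameter despite the presence of infinitely many maps; this is exactly where the uniform contraction bound $c<1$ of Definition \ref{IIIFS} is indispensable, since without it the pieces need not tend to a point and the homogeneity argument collapses. Second, one needs the Baire step to apply, which it does because $\overline{E}=F$ is compact by Lemma \ref{Closure} and Lemma \ref{IIFS}. Granting these, the finite self-similar case is identical and in fact simpler, as there $F=\bigcup_i\phi_i(F)$ holds without any closure. As an alternative to this Baire-category route, the same equality can be read off directly from Mauldin and Urbanski's dimension theory for infinite conformal systems \cite{MRD}.
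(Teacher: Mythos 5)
Your argument is correct, and it is genuinely different from what the paper does: the paper gives no self-contained proof at all, but simply asserts the proposition ``in terms of Mauldin and Urbanski's result \cite{MRD}'' together with Lemma \ref{IIFS} and Lemma \ref{conjugate}. You instead reprove, in this setting, the standard criterion (essentially Corollary 3.9 in \cite{FG}) that a compact set all of whose nonempty relatively open pieces have full upper box dimension satisfies $\dim_P=\overline{\dim}_B$, and you verify the hypothesis by exhibiting a shrunken copy $\phi_w(F)$ of $F$ inside every relatively open piece. Your verification is sound: $\phi_w(F)\subseteq F$ follows by iterating $\phi_i(F)\subseteq F$, the density of each generation $G_n=\bigcup_{|w|=n}\phi_w(F)$ follows by induction from $F=\overline{\bigcup_i\phi_i(F)}$ because similarities are homeomorphisms and $\overline{\bigcup_i\overline{A_i}}=\overline{\bigcup_i A_i}$, and the uniform contraction bound of Definition \ref{IIIFS} (here $c=\beta^{-1}$) controls the diameters of all generation-$n$ pieces simultaneously even when there are infinitely many maps. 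What your route buys is significant: it needs no separation hypothesis whatsoever, whereas Mauldin--Urbanski's dimension theory is developed under the open set condition (and related regularity assumptions), which the IIFS $\Phi^{\infty}$ built from the Matchings is not assumed to satisfy --- indeed the paper's own Example shows it can fail. So your Baire-category argument is arguably the more robust justification of the proposition; the paper's citation is shorter but rests on hypotheses it does not check.
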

The following results were proved in \cite{SumKan}.
\setcounter{Lemma}{13}
\begin{Lemma}\label{almostequal}
If $C$ is countable,  then for any $s\in \mathbb{R}$,  $\dim_{H}(E)=\dim_{H}(K_1+sK_2)$.  
\end{Lemma}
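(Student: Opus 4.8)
The inequality $\dim_{H}(E)\le \dim_{H}(K_1+sK_2)$ is immediate, since $E\subseteq \overline{E}=K_1+sK_2$ by Lemma \ref{Closure} and Hausdorff dimension is monotone. It therefore suffices to establish the reverse inequality $\dim_{H}(K_1+sK_2)\le \dim_{H}(E)$, and the plan is to exhibit $K_1+sK_2$ as the union of $E$ with a set of Hausdorff dimension $0$, and then to invoke the countable stability of Hausdorff dimension. The set to be peeled off is the value set of the codings lying in $C$,
$$V\triangleq\{(a_n)_{\beta}:(a_n)\in C\}.$$
Since $C$ is countable by hypothesis, its image $V$ under the value map $(a_n)\mapsto \sum_{n}a_n\beta^{-n}$ is a countable subset of $\R$, so $\dim_{H}(V)=0$; thus everything reduces to the inclusion $K_1+sK_2\subseteq E\cup V$.

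To prove that inclusion, I would fix $z=x+sy\in K_1+sK_2$ and take its coding $(a_n)=(x_n+sy_n)$, together with the block decompositions $(x_n)=X_1\ast X_2\ast\cdots$ and $(sy_n)=sY_1\ast sY_2\ast\cdots$ supplied by Lemma \ref{coding}. Writing $\ell^{X}_{p}$ and $\ell^{Y}_{q}$ for the lengths of $X_1\ast\cdots\ast X_p$ and $sY_1\ast\cdots\ast sY_q$, I consider the alignment set $\mathcal{L}=\{\ell^{X}_{p}:p\ge 1\}\cap\{\ell^{Y}_{q}:q\ge 1\}$. If $\mathcal{L}$ is infinite, then for each $\ell\in\mathcal{L}$ the prefix $a_1\cdots a_\ell$ equals $(X_1\ast\cdots\ast X_p)+(sY_1\ast\cdots\ast sY_q)$ with the two summands of equal length, hence is a concatenation of Matchings; as this happens for arbitrarily large $\ell$, the definition of $E$ forces $z\in E$, exactly as in the first case of Lemma \ref{Closure}. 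If instead $\mathcal{L}$ is finite, I would argue that $(a_n)\in C$, so that $z\in V$. Granting this dichotomy, countable stability applied to $K_1+sK_2\subseteq E\cup V$ yields $\dim_{H}(K_1+sK_2)\le \max\{\dim_{H}(E),\dim_{H}(V)\}=\dim_{H}(E)$, which together with the trivial direction completes the proof.

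The main obstacle is the finite-alignment case, namely showing that $\mathcal{L}$ finite implies $(a_n)\in C$. The delicate point is that the statement ``a word of $(a_n)$ is a Matching'' permits that word to be re-decomposed into $\hat{P}$-blocks and $s\hat{Q}$-blocks in a manner that need not respect the block boundaries $\ell^{X}_{p},\ell^{Y}_{q}$ of the given coding, and moreover the coding of $z$ is in general not unique. The heart of the argument is therefore combinatorial: one must verify that any Matching word of $(a_n)$ occurring beyond the last index of $\mathcal{L}$ would force a common boundary of the two decompositions past that index, contradicting the finiteness of $\mathcal{L}$, while simultaneously ruling out that some alternative coding of $z$ decomposes into infinitely many Matchings without $z$ already lying in $E$. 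This is precisely the analysis of Matchings carried out in \cite{SumKan}, on which I would rely for the remaining bookkeeping.
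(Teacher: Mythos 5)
First, a point of comparison: the paper does not actually prove Lemma \ref{almostequal}; it is quoted verbatim from \cite{SumKan} (``The following results were proved in \cite{SumKan}''). So your proposal has to be judged as a free-standing argument. Its architecture is the natural one and is surely the intended one: $E\subseteq\overline{E}=K_1+sK_2$ gives the easy inequality, and for the other direction one wants the inclusion $K_1+sK_2\subseteq E\cup V$ with $V=\{(a_n)_{\beta}:(a_n)\in C\}$ countable, after which countable stability of Hausdorff dimension finishes the proof. The infinite-alignment half of your dichotomy is also sound: for $\ell<\ell'$ in $\mathcal{L}$ the decompositions of $a_1\cdots a_{\ell}$ and $a_1\cdots a_{\ell'}$ are nested, since they come from the same block boundaries of $(X_p)$ and $(sY_q)$, so each segment between consecutive alignment points is a sum of two equal-length concatenations and hence a Matching or a concatenation of Matchings; this really does place $z$ in $E$.

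The gap is exactly where you flag it, and it is not residual bookkeeping --- it is the entire mathematical content of the lemma. The implication ``$\mathcal{L}$ finite $\Rightarrow(a_n)\in C$'' is far from formal: membership in $C$ forbids \emph{every} word of some tail from being a Matching, whereas a word $a_{j+1}\cdots a_{j+k}$ may equal a Matching $A+B$ through a decomposition into $\hat{P}$- and $s\hat{Q}$-blocks that ignores the boundaries of the chosen coding entirely; coincidences among the digits $a_i'$, $sb_j'$ and their sums (the shared digit $0$ being the ubiquitous example) make this a genuine possibility that must be excluded by a case analysis on the digit sets. Symmetrically, a coding that fails to lie in $C$ need not have infinitely many aligned prefixes, so the dichotomy cannot be repaired by arguing from $(a_n)\notin C$ instead; and since a point $z$ generally has many codings, one must also rule out the case of a $z$ all of whose codings have finite alignment sets yet none of which lies in $C$ --- such a $z$ is assigned to neither $E$ nor $V$ by your argument. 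Deferring all of this to ``the analysis of Matchings carried out in \cite{SumKan}'' reduces the lemma to the very source being cited. What you have is a correct and cleanly organized reduction of Lemma \ref{almostequal} to the combinatorial claim that every coding with finitely many alignment points lies in $C$ (or at least that the exceptional points form a countable set); that claim is the heart of the matter and is left unproved.
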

\begin{Lemma}
Given any $k\in \mathbb{N}^{+}$.
Let $K_1$ be the attractor of the following IFS 
$$\left\{f_i(x)=\dfrac{x}{\beta^{k}}+a_i,1\leq i\leq n-1, f_n(x)=\dfrac{x}{\beta^{2k}}+a_n\right\},$$
and $K_2$ be the attractor of the following IFS 
$$\left\{g_j(x)=\dfrac{x}{\beta^{k}}+b_j,1\leq j\leq m-1, g_m(x)=\dfrac{x}{\beta^{2k}}+b_m\right\},$$
where $a_i, b_j\in \mathbb{R}, 1\leq i\leq n,1\leq j\leq m$. Then $C$ is countable. 
\end{Lemma}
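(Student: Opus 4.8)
The plan is to reduce membership in $C$ to a purely combinatorial statement about the block boundaries of the two codings, and then to exploit the fact that in the present setting every block has length $k$ or $2k$. First I would fix, for each $(a_n)\in C$, the witness decomposition from the definition of $C$: $(x_n)=X_1\ast X_2\ast\cdots$ with $X_i\in D_1$ and $(sy_n)=sY_1\ast sY_2\ast\cdots$ with $sY_i\in D_2^{\prime}$, together with the threshold $N$ beyond which no word of $(a_{N+i})_{i=1}^{\infty}$ is a Matching. Then I would record the decisive observation: if $j<j^{\prime}$ are two positions that are simultaneously a block boundary of the $X$-decomposition and of the $Y$-decomposition, then $x_{j+1}\cdots x_{j^{\prime}}$ is a concatenation of complete blocks from $D_1$ and $sy_{j+1}\cdots sy_{j^{\prime}}$ a concatenation of complete blocks from $D_2^{\prime}$ of the same length, so the word $a_{j+1}\cdots a_{j^{\prime}}$ is exactly a Matching. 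Hence if the witness decomposition had infinitely many such common boundaries, every tail would contain a Matching word, contradicting $(a_n)\in C$; so membership in $C$ forces the witness decomposition to have only finitely many common boundaries.

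Next I would translate this into a statement about positions. Since every $f_i$ and $g_j$ has ratio $\beta^{-k}$ or $\beta^{-2k}$, every block of $D_1$ and $D_2^{\prime}$ has length $k$ or $2k$, so all boundaries lie in $k\mathbb{Z}$; after dividing by $k$, the $X$-boundaries form a set $P=\{0=s_0<s_1<\cdots\}$ with consecutive gaps in $\{1,2\}$, and likewise $Q$ for the $Y$-boundaries. A position fails to lie in $P$ precisely when a length-$2$ block straddles it, so the complement $S_x:=\mathbb{Z}_{\geq 0}\setminus P$ contains no two consecutive integers, and the same holds for $S_y:=\mathbb{Z}_{\geq 0}\setminus Q$. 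Because $P\cap Q=\mathbb{Z}_{\geq 0}\setminus(S_x\cup S_y)$, having finitely many common boundaries is equivalent to $S_x\cup S_y$ being cofinite.

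The heart of the argument, and what I expect to be the main obstacle, is the combinatorial rigidity forced by these facts together. I would argue that beyond the point where $S_x\cup S_y$ covers everything, $S_x$ and $S_y$ must be disjoint, since a common element $p$ would force $p+1\notin S_x\cup S_y$; and since neither set admits two consecutive elements, the two must strictly alternate, so each of $S_x,S_y$ is eventually a full residue class modulo $2$, of opposite parities. But $S_x$ equaling a full parity class beyond some index means every gap of $P$ is eventually $2$, that is, $x$ is eventually $f_n f_n f_n\cdots$, and likewise $y$ is eventually $g_m g_m\cdots$. This is exactly the step where the restriction to lengths $\{k,2k\}$ is indispensable: it is what converts a cofinite cover by two sparse sets into an eventually constant block pattern.

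Finally I would conclude by counting. The codings $x$ that are eventually $f_n f_n\cdots$ are in bijection with their finite prefixes, hence countable, and similarly for $y$; since $a_n=x_n+sy_n$ recovers $(a_n)$ from the witness pair $(x,y)$, the assignment sending each element of $C$ to its chosen witness is injective into a countable set, so $C$ is countable. The only points needing care are that a word might a priori coincide with a Matching through a decomposition other than the witness one, which can only make $C$ smaller and so does not affect the bound, and the routine bookkeeping of the parities and of the finitely many prefixes.
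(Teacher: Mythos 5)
Your proposal is correct, and it is worth noting that the paper itself offers no proof of this lemma at all: it is quoted from \cite{SumKan} with a bare citation, so there is no in-paper argument to compare against. Your reduction is sound at every step I can check. The observation that two positions which are simultaneously block boundaries of the $X$- and $Y$-decompositions cut out a word that is a Matching (or a concatenation of Matchings, whose first factor is then itself a Matching word of the tail) correctly converts membership in $C$ into finiteness of the set of common boundaries; the passage to $S_x=\mathbb{Z}_{\geq 0}\setminus P$ and $S_y=\mathbb{Z}_{\geq 0}\setminus Q$ uses exactly the hypothesis that every contraction ratio is $\beta^{-k}$ or $\beta^{-2k}$, so that gaps in $P$ and $Q$ lie in $\{1,2\}$ and neither complement contains two consecutive integers; and the rigidity step (a cofinite union of two sets, neither containing consecutive integers, forces disjointness and strict alternation beyond some index, hence each is eventually a full parity class) is elementary and airtight. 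The conclusion that both codings are eventually the constant repetition of the unique length-$2k$ block, and hence that the witness pairs form a countable set into which $C$ injects, finishes the argument. You have also correctly flagged the two places where care is needed: a word could equal a Matching via a decomposition other than the witness one (which only shrinks $C$), and the degenerate case where one decomposition uses only length-$k$ blocks is subsumed, since an empty or sparse $S_x$ cannot participate in a cofinite union while $S_y$ alone never is cofinite. In short, this is a complete and self-contained proof of a statement the paper leaves to an external reference, and the alternation-of-parities argument makes transparent exactly why the $\{k,2k\}$ restriction in the hypothesis is what drives the countability of $C$.
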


\subsection{Dimension of $K_1+sK_2$}
In \cite{SumKan}, we proved the following results. 
\begin{Lemma}\label{finitecc}
If  the similarity  ratios of $K_1$ are homogeneous, denoted by $\beta^{-k}, k\in \mathbb{N}^{+}$,  and the similarity  ratios of $K_2$ have the form  $\beta^{-kp_j}, 1\leq j\leq m, p_j\in \mathbb{N}^{+},$ then $\sharp D$ is finite. 
\end{Lemma}
\begin{Lemma}\label{sss}
If $\sharp(D)$ is finite, then $K_1+sK_2$ is a self-similar set.
\end{Lemma}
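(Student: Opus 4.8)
The plan is to reduce, via Lemma \ref{conjugate}, to proving that $K_1+sK_2$ (with $s=\tan\theta$) is a self-similar set, and then to recognize $K_1+sK_2$ as the attractor of the \emph{finite} iterated function system furnished by the finitely many Matchings. Write $D=\{\hat{R}_1,\dots,\hat{R}_N\}$ for the (now finite) set of all Matchings, and let $\Phi=\{\phi_1,\dots,\phi_N\}$ be the associated similitudes. Each $\hat{R}_i$ is a block of some length $L_i\geq 1$, so reading it in base $\beta$ gives a contraction $\phi_i(x)=\beta^{-L_i}x+c_i$ with ratio $\beta^{-L_i}<1$, exactly as a block $\hat{P}_i$ determines $f_i$. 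Thus $\Phi$ is a genuine finite IFS of contracting similitudes, whose Hutchinson attractor is by definition a self-similar set.

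The key step is to identify $K_1+sK_2$ with this attractor. I would invoke Lemma \ref{IIFS}, whose proof rests only on the identity $E=\bigcup_i\phi_i(E)$ together with Lemma \ref{Closure}, neither of which uses the cardinality of $D$; it therefore remains valid here and yields $\overline{\bigcup_{i=1}^{N}\phi_i(K_1+sK_2)}=K_1+sK_2$. Now observe that when $D$ is finite the closure is redundant: $K_1+sK_2$ is compact, being the image of the compact set $K_1\times K_2$ under the continuous map $(x,y)\mapsto x+sy$; each $\phi_i$ is continuous; and a finite union of compact sets is compact, hence closed. Consequently $\bigcup_{i=1}^{N}\phi_i(K_1+sK_2)=K_1+sK_2$, i.e. $K_1+sK_2$ satisfies the Hutchinson self-similarity equation for the finite system $\Phi$. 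By uniqueness of the invariant set of a finite IFS, $K_1+sK_2$ is precisely the attractor of $\Phi$, and therefore a self-similar set; by Lemma \ref{conjugate} so is $Proj_{\theta}(K_1\times K_2)$ up to similarity.

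The step I expect to require the most care is checking that the cited identities genuinely transfer to the finite regime. Concretely, one must verify that the maps $\phi_i$ carry $K_1+sK_2$ into itself, which is what makes Lemma \ref{IIFS} meaningful: this holds because prepending the Matching $\hat{R}_i$ — a digitwise sum of equal-length concatenations of blocks from $D_1$ and $D_2^{\prime}$, by Definition \ref{Matching} — to a coding $(x_n+sy_n)$ again produces the sum of a legitimate $K_1$-coding and a legitimate $K_2$-coding, so that $\phi_i(K_1+sK_2)\subseteq K_1+sK_2$. I note that the alternative route taken in the proof of Theorem \ref{Main}, namely showing directly that every $x+sy$ admits a coding which is an infinite concatenation of Matchings (so that $E=K_1+sK_2$ without passing to the closure), is also available; its hard point is a pumping argument showing that finiteness of $D$ forces the gaps between successive common block-boundaries to be uniformly bounded, where one must be careful that a cut-point-free run corresponds to a generating Matching rather than a concatenation of shorter ones. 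The compactness argument above sidesteps this difficulty, which is why I would adopt it.
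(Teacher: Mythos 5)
Your proof is correct, but it takes a genuinely different route from the paper's. The paper does not prove Lemma \ref{sss} in situ (it is quoted from \cite{SumKan}), and the sketch given inside the proof of Theorem \ref{Main} proceeds by showing that when $\sharp D$ is finite one can \emph{exactly} realize every $x+sy$ as the value of an infinite concatenation of Matchings, so that $E=K_1+sK_2$ on the nose --- precisely the combinatorial ``pumping'' argument you flag as delicate. You instead take the soft topological route: accept Lemma \ref{IIFS} (correctly observing that its proof, via Lemmas \ref{app} and \ref{Closure}, nowhere uses that $D$ is infinite), note that $K_1+sK_2$ is compact as the continuous image of $K_1\times K_2$, so a finite union $\bigcup_{i=1}^{N}\phi_i(K_1+sK_2)$ is already closed and the closure in Lemma \ref{IIFS} is redundant, and then invoke uniqueness of the Hutchinson attractor. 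Your verification that each $\phi_i$ maps $K_1+sK_2$ into itself (prepending a Matching prepends legitimate $D_1$- and $D_2'$-codings) is the right supporting observation, though it is in fact already contained in the inclusion $\overline{\bigcup_i\phi_i(K_1+sK_2)}\subseteq K_1+sK_2$ of Lemma \ref{IIFS}. The trade-off: the paper's approach yields the stronger structural fact that the Mauldin--Urba\'nski attractor $E$ itself equals $K_1+sK_2$ (i.e.\ every point genuinely has a Matching coding), whereas yours establishes only the self-similarity identity --- which is all the lemma asserts --- at the cost of no extra combinatorics. Both are valid; yours is cleaner for the stated purpose.
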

\begin{proof}[\textbf{Proof of Corollary \ref{Cor}}]
 Corollary \ref{Cor} follows from Lemmas \ref{finitecc}, \ref{sss}  and Nagi and Wang's finite type condition \cite{NW}. 
\end{proof}

We are interested in the case when $K_1+sK_2$ is an attractor of some infinite iterated function system. 
For this case, we utilize Moran's idea \cite{Moran}, and find a sub-infinite iterated function system such that  the new IIFS satisfies the open set condition and the Hausdorff dimension of   two attractors  coincides. 

For convenience, we introduce  the  Vitali algorithm.
Let 
$\Phi^{\infty}=\{\phi_1,\,\phi_2,\,\phi_3,\,\phi_4,\cdots\}
$ be the IIFS generated from the set of all the Matchings. The attractor of this IIFS is 
$$E=
\bigcup\limits_{\{\phi_{i_n}\}\in\Phi^{\infty}}\bigcap\limits_{n=1}^{\infty}\phi_{i_1}\circ\phi_{i_2}\cdots\circ
\phi_{i_n}([0,1]).$$
Define
$$\Psi^{*}=\{\cup_{k=1}^{\infty}\cup_{i_1\cdots i_k}\phi_{i_1}\circ \phi_{i_2}\cdots \circ \phi_{i_k}\}.$$
Clearly, $$\Psi^{*}(E)=\{\cup_{k=1}^{\infty}\cup_{i_1\cdots i_k}\phi_{i_1}\circ \phi_{i_2}\cdots \circ \phi_{i_k}(E)\}$$ is a Vitali class of $E$ (\cite{FG}). Now we implement the Vitali process. Take any $\phi\in \Psi^{*}$, if $\phi_n$ has been selected for $1\leq n\leq k$, then we pick $\phi_{k+1}$ from $\Psi^{*}$ satisfying the following conditions,
\begin{itemize}
\item [(1)] $\phi_{k+1}(E)\cap \phi_{i}(E)=\emptyset$ for $1\leq i\leq k$. 
\item [(2)] $|\phi_{k+1}(E)|\geq 2^{-1}\sup\{|\phi(E)|:\phi\in \Psi^{*} \mbox{ and } \phi(E)\cap \phi_i(E)=\emptyset, 1\leq i\leq k\}$, where $|A|$ denotes the diameter of $A$. 
\end{itemize}
This process  is finished if  the selection of  $\phi_{k+1}$ is no longer possible.  Denote all the similitudes selected from the Vitali process by $\Psi.$
Moran \cite{Moran} proved the following theorem.
\setcounter{Theorem}{17}
\begin{Theorem}\label{Vitali}
Let $$E=
\bigcup\limits_{\{\phi_{i_n}\}\in\Phi^{\infty}}\bigcap\limits_{n=1}^{\infty}\phi_{i_1}\circ\phi_{i_2}\cdots\circ
\phi_{i_n}([0,1]),$$ and
 $$G=
\bigcup\limits_{\{\phi_{i_n}\}\in\Psi}\bigcap\limits_{n=1}^{\infty}\phi_{i_1}\circ\phi_{i_2}\cdots\circ
\phi_{i_n}([0,1]),$$
 Then 
\begin{itemize}
\item [(1)] $\mathcal{H}^s(E)=\mathcal{H}^s(G)$ for any $s$ satisfying $\sum_{\phi_i\in \Psi } r_i^s<\infty$, where $r_i$ is the similarity ratio of $\phi_i;$
\item [(2)] $\dim_{H}(E)=s$, where $s=\inf\left\{t:\sum_{\phi_i\in \Psi } r_i^t\leq 1\right\}$.
\end{itemize}
\end{Theorem}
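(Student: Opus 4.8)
The plan is to treat the two assertions in turn, reducing part (2) to part (1) together with the Mauldin--Urbanski dimension formula (Theorem \ref{DimensionIIFS}). Throughout I normalise so that $E\subseteq[0,1]$, and I record two structural facts that follow at once from the construction of $\Psi$. First, each selected map $\psi\in\Psi$ is a finite composition $\phi_{i_1}\circ\cdots\circ\phi_{i_k}$ of maps from $\Phi^{\infty}$, so every infinite word over $\Psi$ refines to an infinite word over $\Phi^{\infty}$; hence $G\subseteq E$ and in particular $\mathcal{H}^s(G)\le\mathcal{H}^s(E)$ for every $s$. Second, condition (1) of the Vitali process forces the images $\{\psi(E):\psi\in\Psi\}$ to be pairwise disjoint, so the sub-system $\Psi$ enjoys the strong separation condition and therefore the open set condition.

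For part (1) I would show that the deficit $E\setminus G$ is $\mathcal{H}^s$-null whenever $\sum_{\psi\in\Psi}r_{\psi}^{s}<\infty$. The first step is to check that $\Psi^{*}(E)$ is a genuine Vitali class for $E$: since every $x\in E$ carries an infinite coding over $\Phi^{\infty}$ and the uniform bound $c<1$ gives $|\phi_{i_1}\circ\cdots\circ\phi_{i_k}(E)|\le c^{k}|E|\to 0$, the prefixes of that coding supply sets of $\Psi^{*}(E)$ of arbitrarily small diameter containing $x$. The greedy rule in conditions (1)--(2) is exactly the selection used to prove the Vitali covering lemma \cite{FG}, which yields the dichotomy that either $\sum_{\psi\in\Psi}|\psi(E)|^{s}=\infty$ or $\mathcal{H}^s(R)=0$, where $R:=E\setminus\bigcup_{\psi\in\Psi}\psi(E)$. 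Because $\sum_{\psi}|\psi(E)|^{s}=|E|^{s}\sum_{\psi}r_{\psi}^{s}<\infty$, the first alternative is excluded and $\mathcal{H}^s(R)=0$.

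The second step is a self-similar bootstrapping of this single-level estimate. Using disjointness, any $x\in E$ either lies in $R$, or lies in a unique $\psi(E)$ and so equals $\psi(x_1)$ with $x_1\in E$; iterating, either the process never terminates, producing an infinite $\Psi$-word and hence a point of $G$, or it halts at some level with $x\in\psi_{I}(R)$ for a finite word $I=i_1\cdots i_n$ over $\Psi$, where $\psi_{I}:=\psi_{i_1}\circ\cdots\circ\psi_{i_n}$. Thus $E\setminus G\subseteq\bigcup_{I}\psi_{I}(R)$, a countable union over all finite words $I$. By the scaling property of Hausdorff measure, $\mathcal{H}^s(\psi_{I}(R))=r_{\psi_{I}}^{s}\,\mathcal{H}^s(R)=0$ for each $I$, so $\mathcal{H}^s(E\setminus G)=0$; combined with $G\subseteq E$ this gives $\mathcal{H}^s(E)=\mathcal{H}^s(G)$. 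For part (2), with $s=\inf\{t:\sum_{\psi\in\Psi}r_{\psi}^{t}\le 1\}$, the open set condition and Theorem \ref{DimensionIIFS} give $\dim_{H}(G)=s$, and $G\subseteq E$ yields $\dim_{H}(E)\ge s$. For the reverse bound, fix any $t>s$; then $\sum_{\psi}r_{\psi}^{t}\le 1<\infty$, so part (1) applies and $\mathcal{H}^{t}(E)=\mathcal{H}^{t}(G)=0$ since $t>\dim_{H}(G)$, whence $\dim_{H}(E)\le t$; letting $t\downarrow s$ gives $\dim_{H}(E)=s$.

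I expect the delicate point to be the rigorous use of the Vitali covering lemma: one must confirm that $\Psi^{*}(E)$ really covers every point at arbitrarily small scales and that the greedy rule (2) reproduces exactly the disjoint sequence to which the covering lemma's dichotomy applies, so that the hypothesis $\sum_{\psi}r_{\psi}^{s}<\infty$ legitimately rules out the divergent alternative and forces $\mathcal{H}^s(R)=0$. The bootstrapping decomposition $E\setminus G\subseteq\bigcup_{I}\psi_{I}(R)$ is conceptually simple but requires care that disjointness makes the level-by-level code of each point well defined and that ``never terminating'' coincides with membership in the Mauldin--Urbanski attractor $G$.
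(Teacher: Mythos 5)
The paper itself gives no proof of this statement: it is attributed to Moran \cite{Moran} and simply quoted, so there is no internal argument to compare yours against line by line. Judged on its own terms, your reconstruction follows the strategy one would expect from Moran's paper and is essentially correct. The identification of the greedy selection rules (1)--(2) with the disjoint subfamily built in the proof of the Vitali covering theorem does yield the dichotomy ``either $\sum_{\psi\in\Psi}|\psi(E)|^{s}=\infty$ or $\mathcal{H}^{s}(R)=0$'' for $R=E\setminus\bigcup_{\psi\in\Psi}\psi(E)$; the hypothesis $\sum_{\psi\in\Psi}r_{\psi}^{s}<\infty$ rules out the first alternative; and the decomposition $E\setminus G\subseteq\bigcup_{I}\psi_{I}(R)$ over finite $\Psi$-words is legitimate, since injectivity and disjointness make the level-by-level coding well defined and non-termination places the point in the Mauldin--Urbanski attractor $G$. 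The reduction of (2) to (1) plus a dimension formula for $G$ is also the right move.

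The one step I would not let pass as written is the assertion that pairwise disjointness of the compact sets $\psi(E)$, $\psi\in\Psi$, gives the open set condition for the \emph{infinite} system $\Psi$, which you need in order to apply Theorem \ref{DimensionIIFS} to $G$ and obtain the lower bound $\dim_{H}(G)\geq s$. For a finite system, disjoint compact images lie at uniformly positive mutual distance and a small neighbourhood of $E$ serves as the open set; for an infinite $\Psi$ the distances $d(\psi_{i}(E),\psi_{j}(E))$ may tend to $0$ and the images may interleave inside the gaps of one another's convex hulls, so no such open set need exist (the paper's own Example has images accumulating at a point). The standard repair is to exhaust $\Psi$ by finite subfamilies $\Psi_{N}$: each satisfies the strong separation condition, its attractor $G_{N}\subseteq G$ has $\dim_{H}(G_{N})=t_{N}$ with $\sum_{\psi\in\Psi_{N}}r_{\psi}^{t_{N}}=1$, and $t_{N}\uparrow s$, giving $\dim_{H}(G)\geq s$ without any separation hypothesis on the full system. (The upper bound $\dim_{H}(G)\leq s$ needs no separation at all, only the covering estimate you already use.) With that substitution your argument is complete.
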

Therefore, by means of Lemma \ref{Closure} and Theorem \ref{Vitali}, it follows that  $$\dim_{H}(Proj_{\theta}(K_1\times K_2))=\dim_{H}(K_1+sK_2)\geq \dim_{H}(E)=\dim_{H}(G), $$
which gives an lower bound of $\dim_{H}(Proj_{\theta}(K_1\times K_2))$. For the upper bound, we use the similarity dimension of $E$. 
The following lemma is standard. 
\setcounter{Lemma}{18}
\begin{Lemma}\label{similarity}
$\dim_{H}(Proj_{\theta}(K_1\times K_2))\leq s_0$, where $s_0$ is the solution of $$\sum_{\phi_i\in \Phi^{\infty}}r_i^s=1.$$
\end{Lemma}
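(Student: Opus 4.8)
The plan is to first reduce the statement to a purely one-dimensional question and then run the classical covering estimate for infinite iterated function systems. By Lemma~\ref{conjugate}, $Proj_{\theta}(K_1\times K_2)$ is similar to $K_1+sK_2$ with $s=\tan\theta$ (the case $\theta=\pi/2$ being trivial, since $K_2$ is then a genuine finite self-similar set), and since Hausdorff dimension is invariant under similarities it suffices to bound $\dim_{H}(K_1+sK_2)$. By Theorem~\ref{Main} together with Lemmas~\ref{Closure} and \ref{IIFS}, $K_1+sK_2=\overline{E}$, where $E$ is the attractor of the infinite system $\Phi^{\infty}=\{\phi_i\}_{i\geq 1}$ generated by the Matchings, each $\phi_i$ having similarity ratio $r_i=\beta^{-\ell_i}$ with $\ell_i\geq 1$ the length of the corresponding Matching. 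Thus the real content is the standard fact that the similarity dimension is an upper bound for the Hausdorff dimension of the attractor of an IIFS, a bound which, unlike the exact dimension formula of Theorem~\ref{DimensionIIFS}, requires no separation hypothesis.

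For the covering estimate, I would fix $t>s_0$ and set $c:=\sup_i r_i\leq\beta^{-1}<1$. Writing $\sum_i r_i^{t}=\sum_i r_i^{s_0}r_i^{t-s_0}\leq c^{\,t-s_0}\sum_i r_i^{s_0}=c^{\,t-s_0}<1$, I obtain $\lambda:=\sum_i r_i^{t}<1$. Let $I$ be the convex hull of $K_1+sK_2$; since Lemma~\ref{IIFS} gives $\phi_i(K_1+sK_2)\subseteq K_1+sK_2$ and each $\phi_i$ is affine with positive ratio, both endpoints of $\phi_i(I)$ lie in $I$, so $\phi_i(I)\subseteq I$ and the cylinders $\phi_{\mathbf i}(I)=\phi_{i_1}\circ\cdots\circ\phi_{i_k}(I)$ of generation $k$ are nested and cover $E$. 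A direct computation then gives $\sum_{|\mathbf i|=k}|\phi_{\mathbf i}(I)|^{t}=|I|^{t}\bigl(\sum_i r_i^{t}\bigr)^{k}=|I|^{t}\lambda^{k}\to 0$, while the mesh $\max_{|\mathbf i|=k}|\phi_{\mathbf i}(I)|\leq c^{k}|I|\to 0$. Hence $\mathcal H^{t}(E)=0$ for every $t>s_0$.

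The last step, and the point that needs genuine care, is upgrading $\mathcal H^{t}(E)=0$ to $\mathcal H^{t}(\overline{E})=0$, since passing to the closure can a priori raise Hausdorff measure. I would handle this by splitting, at each generation $k$, the words into the finitely many with $r_{\mathbf i}>\varepsilon$, whose cylinders form a closed finite union, and the remaining ones, whose cylinders have diameter $\leq\varepsilon|I|$ and total $t$-content $\leq|I|^{t}\lambda^{k}$; the accumulation points of $\overline{E}$ not already covered all arise from this second, arbitrarily $t$-thin, family, so the closed part of the cover can be thickened to catch them while keeping the total $t$-content below $|I|^{t}\lambda^{k}+o(1)$. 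Letting $k\to\infty$ then yields $\mathcal H^{t}(\overline{E})=0$, so $\dim_{H}(\overline{E})\leq t$ for every $t>s_0$ and therefore $\dim_{H}(K_1+sK_2)\leq s_0$; combined with the reduction above, this is exactly the claim. I expect this closure argument to be the main obstacle, as everything else is the routine geometric-series estimate; alternatively, one may simply invoke the general upper-bound theorem for limit sets of infinite conformal iterated function systems from \cite{MRD}, which already incorporates the passage to the closure.
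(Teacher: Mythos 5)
Your proof is correct and follows essentially the same route as the paper: the paper covers $K_1+sK_2$ by the generation-$k$ images $\phi_{i_1\cdots i_k}(Cov(E))$ and bounds $\mathcal{H}^{s_0}_{\delta}(K_1+sK_2)$ by $\bigl(\sum_i r_i^{s_0}\bigr)^k|Cov(E)|^{s_0}=|Cov(E)|^{s_0}<\infty$, which is exactly your geometric-series estimate taken at $t=s_0$. The only real difference is that the paper simply asserts $\bigcup_{(i_1\cdots i_k)}\phi_{i_1\cdots i_k}(Cov(E))\supseteq\overline{E}$, whereas you correctly flag that an infinite union of closed cylinders need not be closed and supply the finite-part/thin-part thickening argument; that step is glossed over in the paper's write-up and your version repairs it.
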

\begin{proof}
Let $\delta>0$, there exists some $k>0$ such that 
$$|\phi_{i_1\cdots i_k}(Cov(E))|\leq \delta,$$
where   $Cov(E)$ denotes  the convex hull of $E$. 
By the definition of $E,$ it follows that $$E=\bigcup\limits_{i\in \mathbb{N}}\phi_{i}(E).$$
 Then for any $k\geq 1$, 
$$\cup_{(i_1\cdots i_k)\in \mathbb{N}^k}\phi_{i_1\cdots i_k}(Cov(E))\supset \overline{E}=K_1+sK_2.$$
Therefore, 
$$\mathcal{H}^{s_0}_{\delta}(K_1+sK_2)\leq \sum_{(i_1\cdots i_k)\in \mathbb{N}^k}|\phi_{i_1\cdots i_k}(Cov(E)) |^{s_0}=\sum_{(i_1\cdots i_k)\in \mathbb{N}^k}r_{i_1}^{s_0}\cdots r_{i_k}^{s_0}|Cov(E) |^{s_0}. $$
Note that $$\sum_{(i_1\cdots i_k)\in\mathbb{N}^k}r_{i_1}^{s_0}\cdots r_{i_k}^{s_0}|Cov(E) |^{s_0}\leq 
\left(\sum_{i=1}^{\infty}r_i^{s_0}\right)^k|Cov(E)|^{s_0}=|Cov(E)|^{s_0}<\infty.$$
\end{proof}
\begin{proof}[Proof of Theorem \ref{IIFSS}]
 Theorem \ref{IIFSS} follows from Lemma \ref{similarity}, Theorem \ref{Vitali}. 
\end{proof}
\section{One example}
In this section, we give one example to illustrates how to find the Hausdorff dimension of $Proj_{\theta}(K_1\times K_2)$ in terms of Theorems \ref{DimensionIIFS} and \ref{Vitali}.
\begin{Example}
Let $K_1=K_2$ be the attractor of the IFS $$\left\{f_1(x)=\dfrac{x}{\beta^4},\dfrac{x+\beta^8-1}{\beta^8}\right\}.$$
Suppose that $\beta>1.39$, then  for
any $\theta\in\left(\arctan \dfrac{\beta^{12}-\beta^{8}+1}{\beta^{12}-\beta^8-\beta^4}, \arctan \dfrac{\beta^8-2\beta^4}{\beta^{12}-\beta^8+1}\right)$ 
$$\dim_{H}(Proj_{\theta}(K_1\times K_2))=\dfrac{\log \sqrt{\dfrac{1+\sqrt{5}}{2}}}{\log \beta}=\dim_{H}(K_1)+\dim_{H}(K_2).$$
Let $\theta=\arctan\dfrac{\beta^{8}-1}{\beta^8-\beta^4+1}$ and $\beta>1.41$. 
Then $$\dim_{P}(Proj_{\theta}(K_1\times K_2))=\dfrac{\log \gamma}{\log \beta}<\dim_{H}(K_1)+\dim_{H}(K_2),$$ where $\gamma\approx 1.2684$ is the largest  real root of 
$$x^{20}-2x^{16}-2x^{12}+x^{8}+x^{4}-1=0.$$
\end{Example}
Denote $A=\beta^8-1,B=sA,C=A+B$.
Then $$D=\{(0000), (0000000A), (0000000B),(0000000C), (0000000B000A),(0000000A000B),\cdots\}$$
The associated  IIFS of $D$  is  $$\Phi^{\infty}=\{f(x), h_1(x), h_2(x),  \phi_{2n}(x),  \phi_{2n-1}(x),  g(x), n\geq 1\},$$
where 
$$f(x)=\dfrac{x}{\beta^4}, h_{1}(x)=\dfrac{x}{\beta^{8}}+\dfrac{A}{\beta^8},h_{2}(x)=\dfrac{x}{\beta^{8}}+\dfrac{B}{\beta^8}, g(x)=\dfrac{x}{\beta^{8}}+\dfrac{A+B}{\beta^8} $$
$$\phi_{2n-1}(x)=\dfrac{x}{\beta^{4n+8}}+\dfrac{B}{\beta^8}+\dfrac{A}{\beta^{12}}+\dfrac{B}{\beta^{16}}+\cdots+\dfrac{c(n)A+e(n)B}{\beta^{4n+8}},$$
$$\phi_{2n}(x)=\dfrac{x}{\beta^{4n+8}}+\dfrac{A}{\beta^8}+\dfrac{B}{\beta^{12}}+\dfrac{A}{\beta^{16}}+\cdots+\dfrac{c(n)B+e(n)A}{\beta^{4n+8}},n\geq 1,$$
where \begin{equation*}
c(n)=\left\lbrace\begin{array}{cc}
                  1& n \mbox{ is odd}\\

               0& n \mbox{ is even}
                \end{array}\right.
\end{equation*}
\begin{equation*}
e(n)=\left\lbrace\begin{array}{cc}
                  1& n \mbox{ is even}\\

               0& n \mbox{ is odd}
                \end{array}\right.
\end{equation*}
Let $O=(0,1+s)$, and $I=[0,1+s]$.
It is easy to check  the following statements, see Figure 1.
\begin{itemize}
\item [(1)]  $f(O)\cap h_1(O)=\emptyset$ if and only if $s<\beta^4-\beta^{-4}-1$; 
\item [(2)]  $h_1(O)\cap h_2(O)=\emptyset$ if and only if $s>\dfrac{\beta^8}{\beta^8-2}$; 
\item [(3)]  $h_2(O)\cap \phi_2(O)=\emptyset$ if and only if $s<\dfrac{\beta^{12}-2\beta^4}{\beta^{12}-\beta^8+1}$; 
\item [(4)] $\phi_{2n}(O)\cap \phi_1(O)=\emptyset$ if and only if $\dfrac{\beta^{12}-\beta^{8}+1}{\beta^{12}-\beta^8-\beta^4}<s$;
\item [(5)]  $\phi_{2n-1}(O)\cap g(O)=\emptyset$ if and only if  $ s<\beta^{8}-\beta^4-1$,  where $n\geq 1$;
\item [(6)]  $\phi_{2n}(O)\cap \phi_{2n+2}(O)=\emptyset$ and $\phi_{2n-1}(O)\cap \phi_{2n+1}(O)=\emptyset$ if and only if $$\dfrac{\beta^4}{\beta^8-\beta^4-1}<s<\beta^4-\beta^{-4}-1,$$ where $n\geq 1$.
\end{itemize}
\begin{figure}[h]\label{figure1}
\centering
\begin{tikzpicture}[scale=5]
\draw(0,0)node[below]{\scriptsize $0$}--(3,0)node[below]{\scriptsize$1+s$};

\draw(0,-0.3)node[below]{\scriptsize $0$}--(0.3,-0.3);
\node [label={[xshift=0.8cm, yshift=-1.5cm]$f(I)$}] {};

\draw(0.4,-0.3)--(0.55,-0.3);
\node [label={[xshift=2.5cm, yshift=-1.5cm]$h_1(I)$}] {};

\draw(0.6,-0.3)--(0.75,-0.3);
\node [label={[xshift=3.5cm, yshift=-1.5cm]$h_2(I)$}] {};

\draw(2.85,-0.3)--(3,-0.3)node[below]{\scriptsize$1+s$};
\node [label={[xshift=14.5cm, yshift=-1.5cm]$g(I)$}] {};

\draw(0.8,-0.3)--(0.9,-0.3);
\node [label={[xshift=4.5cm, yshift=-1.5cm]$\phi_2(I)$}] {};

\draw(1,-0.3)--(1.05,-0.3);
\node [label={[xshift=5.5cm, yshift=-1.5cm]$\phi_4(I)$}] {};

\draw(1.8,-0.3)--(1.9,-0.3);
\node [label={[xshift=9.5cm, yshift=-1.5cm]$\phi_1(I)$}] {};

\draw(2.0,-0.3)--(2.05,-0.3);
\node [label={[xshift=10.5cm, yshift=-1.5cm]$\phi_3(I)$}] {};

\node [label={[xshift=6.5cm, yshift=-1.9cm]$\cdots\cdots\cdots$}] {};

\node [label={[xshift=12.5cm, yshift=-1.9cm]$\cdots\cdots\cdots$}] {};
\end{tikzpicture}\caption{First iteration}
\end{figure}

Hence, if $\beta>1.39$ then the following inequalities hold $$\dfrac{\beta^4}{\beta^8-\beta^4-1}<\dfrac{\beta^8}{\beta^8-2}<\dfrac{\beta^{12}-\beta^{8}+1}{\beta^{12}-\beta^8-\beta^4}<\dfrac{\beta^{12}-2\beta^4}{\beta^{12}-\beta^8+1}<\beta^4-\beta^{-4}-1<\beta^8-\beta^{4}-1.$$ In other words,   let $\theta\in\left(\arctan \dfrac{\beta^{12}-\beta^{8}+1}{\beta^{12}-\beta^8-\beta^4}, \arctan \dfrac{\beta^8-2\beta^4}{\beta^{12}-\beta^8+1}\right)$, then  
 $\Phi^{\infty}$  satisfies the open set condition with  the open set $(0,1+s)$. 
 In terms of Theorem \ref{DimensionIIFS} and Lemma \ref{almostequal}, it follows that $$\dim_{H}(Proj_{\theta}(K_1\times K_2))=\dfrac{\log \gamma^{*}}{\log \beta},$$
where $\gamma^{*}$ is largest  real root of $x^{12}-2x^8-2x^4+1=0.$
It is easy to check that $$\gamma^{*}=\sqrt{\dfrac{1+\sqrt{5}}{2}}.$$ 
For the second case, 
note that  $s=\tan\theta=\dfrac{\beta^{8}-1}{\beta^8-\beta^4+1}$ if and only if $h_2\circ g=\phi_2\circ f.$ Moreover, if  $\beta>1.41$, 
then $$\dfrac{\beta^8}{\beta^8-2}<\dfrac{\beta^{8}-1}{\beta^8-\beta^4+1}<\dfrac{\beta^{12}-\beta^{8}+1}{\beta^{12}-\beta^8-\beta^4}<\dfrac{\beta^{12}-2\beta^4}{\beta^{12}-\beta^8+1}.$$ In this case the IIFS does not satisfy the open set condition, see the first iteration in Figure 2.  
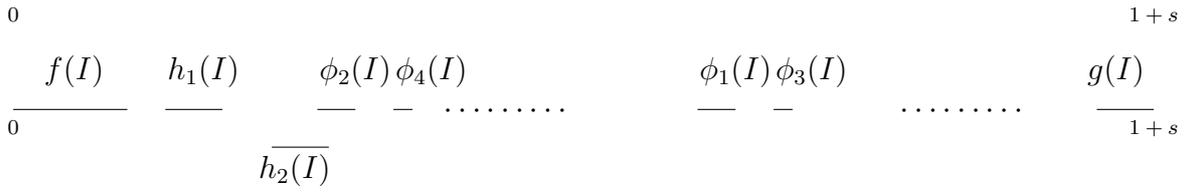
\begin{figure}[h]\label{figure1}
\centering
\begin{tikzpicture}[scale=5]
\draw(0,0)node[below]{\scriptsize $0$}--(3,0)node[below]{\scriptsize$1+s$};

\draw(0,-0.3)node[below]{\scriptsize $0$}--(0.3,-0.3);
\node [label={[xshift=0.8cm, yshift=-1.5cm]$f(I)$}] {};

\draw(0.4,-0.3)--(0.55,-0.3);
\node [label={[xshift=2.5cm, yshift=-1.5cm]$h_1(I)$}] {};

\draw(0.68,-0.4)--(0.83,-0.4);
\node [label={[xshift=3.7cm, yshift=-2.8cm]$h_2(I)$}] {};

\draw(2.85,-0.3)--(3,-0.3)node[below]{\scriptsize$1+s$};
\node [label={[xshift=14.5cm, yshift=-1.5cm]$g(I)$}] {};

\draw(0.8,-0.3)--(0.9,-0.3);
\node [label={[xshift=4.5cm, yshift=-1.5cm]$\phi_2(I)$}] {};

\draw(1,-0.3)--(1.05,-0.3);
\node [label={[xshift=5.5cm, yshift=-1.5cm]$\phi_4(I)$}] {};

\draw(1.8,-0.3)--(1.9,-0.3);
\node [label={[xshift=9.5cm, yshift=-1.5cm]$\phi_1(I)$}] {};

\draw(2.0,-0.3)--(2.05,-0.3);
\node [label={[xshift=10.5cm, yshift=-1.5cm]$\phi_3(I)$}] {};

\node [label={[xshift=6.5cm, yshift=-1.9cm]$\cdots\cdots\cdots$}] {};

\node [label={[xshift=12.5cm, yshift=-1.9cm]$\cdots\cdots\cdots$}] {};
\end{tikzpicture}\caption{First iteration}
\end{figure}
 We make use of the Vitali process to find the  $\Psi.$ 
It is not difficult to check that in $\Phi^{\infty}$
only  for the pair $(h_2, \phi_2)$, $h_2(O)\cap \phi_2(O)\neq \emptyset$. For other similitudes $$(S_1(x), S_2(x))\neq (h_2, \phi_2) ,$$ ($S_i(x)\in \Phi^{\infty}, i=1, 2$), $S_1(O)\cap S_2(O)=\emptyset$, see the first iteration in Figure 2. 

Hence, we implement the Vitali process and find all the similitudes of $\Psi$, i.e. 
$$\Psi=\{\Phi^{\infty}\setminus\{\phi_2\}\}\cup \cup_{k=1}^{\infty}\{\phi_{2^{k}}(\Phi^{\infty}\setminus\{\phi_2,  f\})\},$$
where $\phi_{2^{k}}(\Phi^{\infty}\setminus\{\phi_2,  f\})=\{\phi_{2^k}\circ h:h\in \Phi^{\infty}\setminus\{\phi_2,  f\})\}$ for any $k\geq 1.$

By Theorem \ref{Vitali} and Lemma \ref{almostequal}, it follows that $\dim_{H}(Proj_{\theta}(K_1\times K_2))=\dfrac{\log \gamma}{\log \beta},$ where $\gamma\approx 1.2684$ is the largest  real root of 
$$x^{20}-2x^{16}-2x^{12}+x^{8}+x^{4}-1=0.$$
\section{Final remarks}
We can obtain  the following stronger  result.
\begin{Theorem}
Take any $K_1, K_2,\cdots, K_n\in \mathcal{A}$ and any real numbers $p_1,\cdots, p_n$. If there are some $1\leq i\neq j\leq n$ such that   $p_{i}, p_j\neq 0$, then 
$$p_1K_1+p_2K_2+\cdots +p_nK_n=\left\{\sum_{i=1}^{n}p_ix_i:x_i\in K_i, 1\leq i\leq n\right\}$$ is 
a self-similar set or an attractor of some infinite iterated function system. 
\end{Theorem}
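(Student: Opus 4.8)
The plan is to transcribe, with the obvious modifications, the machinery developed in Section 2 for the two-set sum $K_1+sK_2$ to the $n$-fold weighted sum. First I would discard the vanishing terms: each summand with $p_i=0$ contributes only $\{0\}$ to the Minkowski sum, so after relabelling we may assume every $p_i\neq 0$, and the hypothesis guarantees that at least two summands remain. Writing the IFS of each $K_i$ through the block framework of Lemma \ref{coding}, every point $x_i\in K_i$ has a base-$\beta$ coding that is an infinite concatenation of blocks from a finite set $D_i$. Scaling by $p_i$ merely rescales the non-zero digit of each block, so I would set $D_i'=\{p_i\hat{P}:\hat{P}\in D_i\}$ and observe that $p_ix_i$ has a coding that is a concatenation of blocks from $D_i'$. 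Since all the $K_i$ share the base $\beta$, the coding of $\sum_{i=1}^{n}p_ix_i$ is the digit-wise sum of these $n$ codings, exactly as in the case $n=2$.

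Next I would extend Definition \ref{Matching} from two block sets to $n$ block sets. An $n$-fold Matching is a digit-wise sum $A_1+A_2+\cdots+A_n$, where each $A_i$ is a concatenation of blocks drawn from $D_i'$ and all $n$ concatenations share a common length; the existence of such equal-length concatenations is governed by the same Diophantine condition on the (integer) block lengths as in Definition \ref{Matching}, now imposed simultaneously across the $n$ sets. Because every block length is a positive integer, the argument behind Lemma \ref{GenerateMatchings} applies unchanged and shows that the collection $D$ of all $n$-fold Matchings is at most countable. Each Matching determines a unique similitude with respect to $\beta$, and these assemble into a system $\Phi^{\infty}=\{\phi_1,\phi_2,\ldots\}$.

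With $\Phi^{\infty}$ in hand, the dichotomy is the same as in the proof of Theorem \ref{Main}. If $D$ is finite, then $\sum_i p_ix_i$ can be coded directly by an infinite concatenation of Matchings, and $p_1K_1+\cdots+p_nK_n$ is a self-similar set, exactly as in Lemma \ref{sss}. If $D$ is infinite, I would form the Mauldin--Urbanski set $E=\bigcup_{\{\phi_{i_n}\}}\bigcap_{n=1}^{\infty}\phi_{i_1}\circ\cdots\circ\phi_{i_n}([0,1])$ and run the three lemmas \ref{app}, \ref{Closure}, \ref{IIFS} verbatim: the approximation lemma shows that every coding lying in the analogue of the class $C$ is approximable by concatenations of Matchings, whence $\overline{E}=p_1K_1+\cdots+p_nK_n$ and $\overline{\bigcup_i\phi_i(p_1K_1+\cdots+p_nK_n)}=p_1K_1+\cdots+p_nK_n$, so the sum is the attractor of some infinite iterated function system, as in Theorem \ref{Main}.

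The only step that requires genuine care, rather than mechanical transcription, is the $n$-fold version of the approximation Lemma \ref{app}. There one is handed a prefix of $\sum_i p_ix_i$ that is a prefix of a digit-wise sum of concatenations from the $n$ sets whose \emph{individual} lengths $L_1,\ldots,L_n$ need not agree, and one must manufacture from it a genuine Matching whose leading digits reproduce that prefix. In the case $n=2$ this is achieved by raising each concatenation to a power, so that both lengths become $L_1L_2$. The hard part will be organising the analogous simultaneous padding for $n$ blocks: raising the $i$-th concatenation to the power $\bigl(\prod_{j}L_j\bigr)/L_i$ (equivalently to $\mathrm{lcm}(L_1,\ldots,L_n)/L_i$) brings all $n$ of them to a common length, so their digit-wise sum is a Matching, or a concatenation of Matchings, whose first digits still agree with the given prefix. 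Once this common-length device is in place, the remainder of the estimate is identical to Lemma \ref{app}, and the theorem follows.
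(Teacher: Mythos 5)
Your proposal takes exactly the route the paper intends: the paper's own justification of this theorem is the single remark that the proof is ``similar to Theorem \ref{Main}'', and your write-up is precisely that transcription --- scaled block sets $D_i'$, $n$-fold Matchings, countability as in Lemma \ref{GenerateMatchings}, the finite/infinite dichotomy, and the common-length (lcm) padding needed in the $n$-fold analogue of Lemma \ref{app}. It is correct and, if anything, supplies more detail than the paper does.
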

The proof of this result is similar to Theorem \ref{Main}. Therefore, we can consider the set
$$Proj_\theta(K_1\times K_2\times\cdots \times K_n),$$  and obtain similar result as Theorem \ref{Main}. 
Finally, we pose the following question:
\setcounter{Question}{1}
\begin{Question}
Take $K_1, K_2\in \mathcal{A}$ and $\theta\in[0,\pi)$. If $$\dim_{H}(Proj_{\theta}(K_1\times K_2))=\dim_{H}(K_1)+\dim_{H}(K_2), $$
 then must the IFS (IIFS) of the attractor, which is similar to $Proj_{\theta}(K_1\times K_2)$,   satisfy the open  set condition? 
\end{Question}

\section*{Acknowledgements}
The work is supported by National Natural Science Foundation of China (Nos.11701302,

11671147). The work is also supported by K.C. Wong Magna Fund in Ningbo University.


\end{document}